\newtheorem{lemma}{Lemma}
\newtheorem{theorem}{Theorem}
\newtheorem{remark}{Remark}
\newtheorem{defn}{Definition}
\let\mathbb=\mathds 
\def\diag{\mathop{\mathrm{diag}}}  
\def\det{\mathop{\mathrm{det}}}  
\def\Co{\mathop{\mathrm{Co}}}  
\def\d{\mathrm{d}} 
\begin{document}

\title{A Distributed Power Control and Transmission Rate Allocation Algorithm over Multiple Channels}

\author{Themistoklis Charalambous,~\IEEEmembership{Member,~IEEE}
\thanks{Themistoklis Charalambous is with the Department of Electrical and Computer Engineering, University of Cyprus, Nicosia 1678 (Email: themis@ucy.ac.cy).}
}

\maketitle

%
%
\begin{abstract}
\noindent In this paper, we consider multiple channels and wireless nodes with multiple transceivers. Each node assigns one transmitter at each available channel. For each assigned transmitter  the node decides the power level and data rate of transmission in a distributed fashion, such that certain Quality of Service (QoS) demands for the wireless node are satisfied. More specifically, we investigate the case in which the average SINR over all channels for each communication pair is kept above a certain threshold. A joint distributed power and rate control algorithm for each transmitter is proposed that dynamically adjusts the data rate to meet a target SINR at each channel, and to update the power levels allowing for variable desired SINRs. The algorithm is fully distributed and requires only local interference measurements. The performance of the proposed algorithm is shown through illustrative examples.
\end{abstract}




\ifCLASSOPTIONpeerreview
\begin{center} \bfseries EDICS Category: 3-BBND \end{center}
\fi
\IEEEpeerreviewmaketitle

%
%
\section{Introduction}\label{introduction}

Throughput is an important metric in wireless networks and its improvement is achievable by efficient use of the wireless channel. Since each transmission corresponds to a spatiotemporal propagation of radio waves that are received by all nodes in proximity utilising the same channel, nodes interfere with each other when they use the same channel simultaneously. This is called co-channel interference (CCI) \cite{2002_rappaport_wireless}. In addition, the power of each transmitter in a wireless network is directly related to the resource usage of the link and it is a valuable resource, since the batteries of the wireless nodes have limited lifetime. Increased power ensures longer transmission distance and higher data transfer rate. However, power minimisation not only increases battery lifetime, but also increases the effective interference mitigation which in turn, increases the overall network capacity by allowing higher frequency reuse. Furthermore, the near-far problem \cite{2002_rappaport_wireless} is commonly solved by dynamic power adjustments by the transmitters. Dynamic power control in wireless networks allows devices to setup and maintain wireless links with minimum power while satisfying certain constraints on QoS. As a result, power control (also referred to as transmission power control) has been a prominent research area with increased interest during the last two decades.


In this paper, we combine power control with transmission rate in the presence of multiple channels. The main goal for each communication pair is to retain the average of the SINRs (or the transfer rates) from all the available channels above or equal to a certain threshold. A distributed scheme is developed in which individual users can cooperate in such a way that otherwise infeasible states can be achieved in a distributed manner without the need of interlink communication among users and centralized computation as required in a centrally controlled wireless environments. In order to be able to reach states that are infeasible to be achieved by simultaneous transmissions, it is necessary to allow for variable desired SINRs to be assigned in each of the available channels. Hence, we relax the hard constraints for SINRs and try to achieve the desired QoS on average by making use of multiple channels. As a result, the problem targeted in this paper is how communication pairs can achieve on average the QoS targets that belong in the convex hull of the system's feasible set, but would probably be impossible to be achieved with the network setting by simultaneous transmissions. The difference of the problem being solved in this study compared to other related studies (e.g. for opportunistic transmission and Multi-Input Multi-Output (MIMO) interference systems) relies on the fact that we consider different frequency channels in which the network configuration might be very different, and also consider the general SINR regime in which the problem cannot be transformed into a tractable convex optimisation problem. 
As a result, even though a considerable amount of work has been done to characterize the capacity of the system and to find the maximum throughput of the system when multiple channels are available, to the best of our knowledge there is no work on the joint power control and transmission rate allocation for multiple channels, when multiple users appear in the network and they need to fulfil a QoS requirement in the general SINR regime.


Early work in the field of power control for wireless networks \cite{1973:Aein,1992:zander} proposed power balancing, which equalizes the Signal-to-Interference Ratio (SIR) in all the wireless links. These algorithms need global information about the network setting. This capacity improvement initiated extensive research on power control with focus on the design of distributed algorithms (working synchronously \cite{1992:zander_Distributed, 1993:foschini}, asynchronously  \cite{1993:Mitra, 1995:Yates}, under constraints \cite{1995:grandhi_constrained}, with active link protection \cite{2000:bambos_channel}, in the presence of time delays \cite{2008:constantdelays, 2009:varyingdelays,2009:myThesis,2009:Moller,2010:Moller} etc.) to meet a prefixed SINR target (hard constraint), determined by the QoS requirements. The prefixed SINR target tracking (which is the condition for inelastic traffic) ensures that a constant transmission rate can be sustained. If a feasible solution exists, then there exists a unique solution that minimizes transmit power in a pareto sense. But, if not, then the performance of the whole network degrades and the capacity is deteriorated. The target tracking approach is suitable for real-time, delay-sensitive applications like mobile phone services. 

In view of the proliferation of wireless data though, it is essential to investigate further transmission schemes, that is, techniques that facilitate elastic and/or opportunistic traffic should be considered, where time-varying rates are allowed and large delays are tolerated, such as in the World Wide Web (WWW) and video streaming. When each node is assigned its desired SINR, it has no knowledge of the network and as a result, the combination of all users' QoS requirements are sometimes impossible to be fulfilled in a single channel. If nodes could exchange information and obtain full information about the network setting, then they could adjust their desired SINRs so that the network becomes globally asymptotically stable when all transmitters operate simultaneously. Since every node affects all the other nodes in the network, it is not possible (especially in big networks) to acquire the knowledge required about the whole network. Furthermore, even in small networks consisting of two communication pairs only (i.e. four nodes), simultaneous transmissions for the desired SINRs may not be feasible. 

In general, the fluctuation of wireless channels can be exploited using power and transmission rate control in order to meet QoS requirements, i.e., a node can increase its transmit power whenever the interference at its receiver is low and decrease it when the interference is high. The adaptation of transmission rate to the actual propagation and interference conditions is facilitated by modern adaptive modulation and coding schemes. The transmission rate can be selected so that the Bit Error Rate (BER) is sufficiently small and this can be achieved by adjusting the transmission rates according to the SINR. In that way, more information is transmitted when the channel conditions are favourable by adjusting the transmission rate accordingly. This approach enables the improvement of the network's convergence and the satisfaction of heterogeneous service requirements. 

It is well known that the adaptation of the transmit power, data rate, and coding scheme increases spectral efficiency. The IEEE 802.11b scheme allows nodes to increase their transfer rate up to $11$ Mbps, depending on the SINR at the receiver. The performance achieved through power control can be further improved by allowing for dynamic adjustment of the transfer rate based on the SINR. There exist some approaches in the literature that deal with dynamic adjustment of the desired SINR, and hence the data rate. Some consider a single channel only (e.g. \cite{1999:Qiu,2000:Wirelessperformanceanalysis, 2003:Xiao, 2009:eccpaper}) whereas others consider the multi-channel case (e.g. \cite{2000:Wuintelligentmedium,2006:Siam, 2006:distributed_compensation}) as well. Concepts and related work on both approaches are briefly described in turn.  

Firstly, we describe the approach that supports the use of adaptive desired SINR for the single channel only. Some schemes (e.g. \cite{2003:Xiao, 2005:subramanian} and \cite{2009:eccpaper}) tried to deal with the case in which the desired SINR could be adaptive, depending on the channel conditions, but the main idea is restricted within the limits of a compromise on the QoS demand. Their approach enables the improvement of the system convergence and the satisfaction of heterogeneous service requirements. 
In \cite{2004:Sung_Leung, 2006:Leung} an opportunistic power control algorithm is proposed which stems from the concepts of opportunistic communications \cite{1998:Tse_multi-accessfading, 1998:Hanly_multi-accessfading}.  This algorithm provides tunable parameters in order to exploit the trade-off between throughput and power consumption. The algorithm is proven to converge to a unique fixed point. However, it does not consider a QoS requirement, but rather tries to maximize the throughput for individual users. The same problem has also been targeted by \cite{2010:fabiano}, in which a class of distributed power control algorithms that exploit the channel variability opportunistically is presented.  These algorithms are restricted to maintain the desired QoS in a certain range, which is specified by the node. Hence, these algorithms can be converted to a conventional algorithm that requires a prefixed QoS requirement.  Nevertheless, none of the approaches associate the QoS requirement with a cost or utility function that leads to a solvable power control problem. Consequently, there is no way to guarantee that on average their QoS targets are fulfilled. In other words, there is no way to maintain a constant average data rate.

Since wireless nodes may use different channels and they have different locations at different times, resource allocation can take advantage of the diversity in space, time and frequency. Wireless technology standards provide a radio-frequency (RF) spectrum with a set of many non-overlapping channels  and a node has the option to choose on which channels to transmit. Therefore, we can make use of extra channels, so that nodes experiencing much interference by other nodes can choose to operate in a different channel, if possible. However, since it is possible to establish diversity by introducing multiple transceivers or transceivers with multiple antennae on a single node, communication between two wireless nodes should not be confined on a single channel only, but make use of all the available channels. Multi-Input Multi-Output (MIMO) links are antenna arrays at both ends of a link and transmit parallel streams at the time and frequency channel. Channels arising from the use of spatial diversity at both the transmitter and the receiver have been considered in the literature (e.g.  \cite{1999:Teletar} and reference there in) and it is shown to have improved capacity with respect to Single-Input Single-Output (SISO) systems because of the use of parallel channels; such a mutli-access network with MIMO links has been considered in previous studies (such as \cite{2006:Arslan, 2008:ScutariPalomar, 2008:loyka_capacity} and references therein). As aforementioned there is a significant difference of the problem being solved in this study and MIMO interference systems. In this study, we consider different frequency channels in which the network configuration might be very different and also consider the general SINR regime in which the problem cannot be transformed into a tractable convex optimisation problem. 
Even though a considerable amount of work has been done to characterize the capacity of the system and to find the maximum throughput of the system when multiple channels are available, to the best of our knowledge there is no work on the joint power control and transmission rate allocation for multiple channels when multiple wireless nodes appear in the network and they need to fulfil a QoS requirement in the general SINR regime.


The rest of the paper is organized as follows. Section \ref{Notation} establishes the notation used throughout the paper. Section \ref{model} describes the mathematical system model. Section \ref{preliminaries} presents some preliminary results on the conditions for feasible networks and provides sufficient conditions for stability of the Foschini-Miljanic (FM) algorithm \cite{1993:foschini} using Lyapunov Stability theory, a useful preliminary result for the subsequent analysis. Section \ref{feasibilityRegion} addresses the feasibility regions based on the channel model considered. Section \ref{formulation} formulates the problem being targeted. Section \ref{results} derives the main result of this work, in which we develop a distributed scheme that allows variable desired SINRs and facilitate the use of multiple channels. Section \ref{examples} demonstrates the validity of the proposed scheme through illustrative examples. Finally, Section \ref{conclusions} concludes the paper with a brief discussion on the algorithm developed as well as future directions.

%
%
\section{Notation}\label{Notation}

The sets of complex, real and natural numbers are denoted by $\mathds{C}$,  $\mathds{R}$ and $\mathds{N}$, respectively; their positive orthant is denoted by the subscript $+$ (e.g. $\mathds{C}_{+}$). Vectors are denoted by bold letters whereas matrices are denoted by capital letters.  $A^{T}$ and $A^{-1}$ denote the transpose and inverse of matrix $A$ respectively. For two symmetric matrices $A$ and $B$, $A \succ ( \succeq)B$ means that $A-B$ is (semi-)positive definite. By $I$ we denote the identity of a squared matrix. $|A|$ is the elementwise absolute value of the matrix (i.e. $|A|\triangleq[|A_{ij}|]$), $A(<) \leq B$ is the (strict) element-wise inequality between matrices A and B. A matrix whose elements are nonnegative, called nonnegative matrix, is denoted by $A \geq 0$ and a matrix whose elements are positive, called positive matrix, is denoted by $A>0$. $\sigma(A)$ denotes the spectrum of matrix $A$, $\lambda(A)$ denotes an eigenvalue of matrix $A$, and $\rho(A)$ denotes its spectral radius. $\det(A)$ denotes the determinant of a squared matrix $A$ and $\diag(x_{i})$ the matrix with elements $x_{1}$, $x_{2}$ , $\ldots$ on the leading diagonal and zeros elsewhere.

Further notation used in the paper is tabulated below (Table \ref{notation}):
\begin{notatio}\caption{Notation used in the paper:}
\begin{tabular}{l|l}
$\mathcal{N}$ & The set of all nodes in the network\\
$\mathcal{L}$ & The set of active links in the network\\
$\mathcal{G}$ & The graph of the network\\
$\mathcal{T}$ & The set of transmitters in the network\\
$\mathcal{R}$ & The set of receivers in the network\\
$\mathcal{C}$ & The set of available channels\\
$g_{ij}$ & The channel gain on the link $i \rightarrow j$\\
$p_{i,k}$ & The power level of transmitter $i$ in channel $k$\\
$w_{i,k}$ & The interference at receiver $i$ in channel $k$\\
$x_{i,k}$ & The desired SINR at receiver $i$ in channel $k$\\
$\nu_{i,k}$ & The variance of thermal noise at receiver $i$ in channel $k$\\
$\gamma_i$ & The average capture ratio desired at the $i^{th}$ receiver\\
\end{tabular}\label{notation}
\end{notatio}

%
%
\section{System Model}\label{model}

The system model can be divided into two levels: level 1 describing the network as a whole; and level 2 describing the channels. Thus, we have the network model and the channel model. At the network level, the model concerns the general topology of the nodes and their characteristics. At the channel level, the model describes the assessment of the link quality between communication pairs and the interaction between the nodes in the network.

\subsection{Network Model}
Consider a network where $\mathcal{T}$ denotes the set of transmitters and $\mathcal{R}$ denotes the set of receivers in the network. The links are assumed to be unidirectional and each node is supported by omnidirectional antennae. 
At each time instant, each node can act as a receiver or a transmitter only due to the half-duplex nature of the wireless transceiver. Each transmitter aims at communicating with a single node (receiver) only. 

\subsection{Channel model}
As mentioned above, the link quality is measured by the SINR. The channel gain on
the link between transmitter $i$ and receiver $j$ is denoted by
$g_{ij}$ and incorporates the mean path-loss as a function of
distance, shadowing and fading, as well as cross-correlations
between signature sequences. All the $g_{ij}$'s are positive and can
take values in the range $(0,1]$ (see Figure \ref{network_example}). The power level chosen by
transmitter $i$ is denoted by $p_{i}$ and the intended receiver is
also indexed by $i$. $\nu_{i}$ denotes the variance of thermal noise
at the receiver $i$, which is assumed to be additive Gaussian noise.

The interference power at the $i$th receiver consists of both the interference caused by other transmitters in the network $\sum_{j\in \mathcal{T}_{-i}}g_{ji}p_{j}$ (where $\mathcal{T}_{-i}$ denotes all the transmitters $j$ in the network that interfere with transmitter's $i$ communications, i.e. $j\neq i, j \in \mathcal{T}$), and the thermal noise $\nu_i$ in node $i$'s receiver. Therefore, the interference at the receiver $i$ is given by
\begin{align}\label{interference}
w_{i}=\sum_{j\in \mathcal{T}_{-i}}{g_{ji}p_{j} +\nu_{i}},
\end{align}
while the SINR\index{SINR} at the receiver $i$ is given by
\begin{align}\label{SINR1}
\Gamma_{i}=\frac{g_{ii}p_{i}}{\sum_{j\in \mathcal{T}_{-i}}{g_{ji}p_{j} + \nu_{i}}}\ .
\end{align}
Due to the unreliability of the wireless links, it is necessary to
ensure QoS in terms of SINR in wireless
networks. Hence, independently of nodal distribution and traffic
pattern, a transmission from transmitter $i$ to its corresponding
receiver is successful (error free) if the SINR of the receiver is
greater or equal to $\gamma_{i}$ ($\Gamma_{i} \geq \gamma_{i}$),
called the \textit{capture ratio} which depends on the modulation
and coding characteristics of the radio. Therefore,
\begin{align}\label{SINR_geq}
\frac{g_{ii}p_{i}}{\sum_{j\in \mathcal{T}_{-i}}{g_{ji}p_{j} +
\nu_{i}}}\geq \gamma_{i}
\end{align}

\begin{figure}[h]
    \centering
  \includegraphics[width=2.3in]{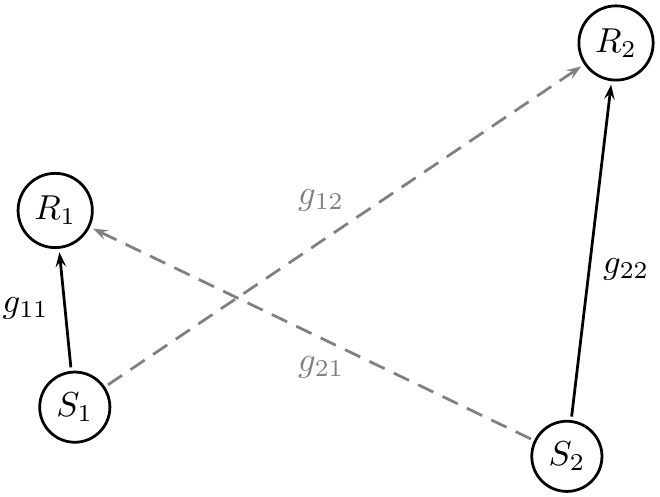}
  \caption{An example of a network consisting of two communication pairs only.  Each pair $i$ consists of a transmitter $S_i$ and a receiver $R_i$ connected with a solid line while the grey dotted arrows indicate the interference that transmitters cause to the neighboring receivers.}
  \label{network_example}
\end{figure}


\begin{remark}
We have not specified any model for determining the positions of the nodes, since we investigate the general case of a network that any position could be possible. Further, we have also not specified any model related to the propagation of signals. In our context, these two models will ultimately specify the channel gains, $g_{ij}$. Nevertheless, they are of secondary importance in this study since it is focused on how the QoS is improved given the channel gains and it only depends on the power of the received signals. Note also that the effect of nodes' mobility is not consider in this study. However, this could be relaxed to the case of low mobility, where the link structure is expected to change slowly with respect to the packet rates and network updates.
\end{remark}

%
%
\section{Preliminaries}\label{preliminaries}

\subsection{System Feasibility}

Inequality \eqref{SINR_geq} which depicts the QoS requirement of a communication pair $i$ while transmission takes place is equivalent to the following condition:
\begin{align}
p_{i} \geq \gamma_{i}\left(\sum_{j\in \mathcal{T}_{-i}}{\frac{g_{ji}}{g_{ii}}p_{j} + \frac{\nu_i}{g_{ii}}}\right) .
\end{align}
In matrix form, for a network consisting of $n$ communication pairs, this can be written as
\begin{align}\label{SINRmatrix6}
\mathbf{p} \geq \Gamma G \mathbf{p} + \bm{\eta},
\end{align}
where $\Gamma=\diag(\gamma_{i})$, $ \mathbf{p}=\left(
\begin{array}{cccc}
p_{1} & p_{2} & \ldots & p_{n} \\
\end{array}
\right)^{T} $, $ \eta_{i}={\gamma_{i}\nu_i}/{g_{ii}}$ and
\begin{align*}
G_{ij}=\begin{cases}
0 & \text{, if $i=j$},\\
\frac{g_{ji}}{g_{ii}} & \text{, if $i \neq j$}.
\end{cases}
\end{align*}
Let $C=\Gamma G$ such that
\begin{align}\label{C}
C_{ij}=\begin{cases}
0 & \text{, if $i=j$},\\
\gamma_{i}\frac{g_{ji}}{g_{ii}} & \text{, if $i \neq j$},
\end{cases}
\end{align}
then \eqref{SINRmatrix6} can be written as
\begin{align}\label{SINRmatrix16}
(I-C)\mathbf{p} \geq \bm{\eta}.
\end{align}

\noindent The matrix $C$ has strictly positive elements off diagonal and it is reasonable to assume that is irreducible \cite{1985:matrix}, since we are not considering totally isolated groups of links that do not interact with each other. By the Perron-Frobenius theorem \cite{1985:matrix}, we have that the spectral radius of the matrix $C$ is a simple eigenvalue, while the corresponding eigenvector is positive componentwise. The necessary and sufficient condition for the existence of a nonnegative solution to inequality \eqref{SINRmatrix16} for every positive vector $\bm{\eta}$ is that $(I-C)^{-1}$ exists and is nonnegative. However, $(I-C)^{-1} \geq 0$ if and only if $\rho(C)<1$ \cite{1994:Topicsmatrix} (\emph{Theorem 2.5.3}), \cite{2005:PerronFrobenius}. Therefore, the necessary and sufficient condition for \eqref{SINRmatrix16} to have a positive solution $\mathbf{p^{*}}$ for a positive vector $\displaystyle \bm{\eta}$ is that the Perron-Frobenius eigenvalue of the matrix $C$ is less than $1$. Hence, a network described by matrix $C$ is feasible if and only if $\rho(C)<1$.

\subsection{Stability of the FM algorithm using Lyapunov Theory}\label{FMlyap}

\noindent Given a network, we know that a feasible solution to the power control problem exists if the Perron-Frobenius eigenvalue of matrix $C$ is less than one ($\rho(C)<1$), where matrix C is given by \eqref{C}. If the condition holds, the FM algorithm \cite{1993:foschini} is asymptotically stable. Since the FM algorithm is linear, when the stability condition holds we know that there exists a quadratic Lyapunov function which establishes the stability of the system.

\noindent  Now, the notions of Lyapunov and D-stability are being used for the derivation of some preliminary results on the stability of the FM algorithm, defined by the following differential equation \cite{1993:foschini}:
\begin{align}\label{Pupdate}
	\frac{\mathrm{d} p_{i}(t)}{\mathrm{d} t}=k_{i} \left(-p_{i}(t)+\gamma_{i}\left({\sum_{j\in \mathcal{N}_{-i}}{\frac{g_{ji}}{g_{ii}}p_{j}(t) + \frac{\nu_i}{g_{ii}}}}\right)\right)
\end{align}
where $k_{i} \in \mathbb{R}_+$ denotes the proportionality constant and $\gamma_{i}$ denotes the desired SINR. It is assumed that each node $i$ has only knowledge of the interference at its own receiver.

\noindent  By defining $\mathbf{e}(t)=\mathbf{p}(t)-\mathbf{p}^*$, where $\mathbf{p}^*$ denotes the vector of power levels when the system is at the equilibrium, the stability of the system will be assessed. With the above system of equations in mind, we note the following important result that is central to the conclusion of this paper.

\begin{lemma}\label{chap5_lemma}
Suppose the spectral radius of matrix $C$ in \eqref{C} is less than $1$,
then the quadratic Lyapunov function
\begin{align}
V(\mathbf{e}) = \mathbf{e}^{T}A^{T}DA \mathbf{e} ,
\label{lyapfunction}
\end{align}
where $A = KH$, $K = \diag(k_{i})$, $H=I-C$ and $D$ is a positive diagonal matrix, stabilizes \eqref{Pupdate}
{\small
\begin{align}
\frac{\d p_{i}(t)}{\d t}=k_{i} \left(-p_{i}(t)+\gamma_{i}\left({\sum_{j\in \mathcal{T}_{-i}}{\frac{g_{ji}}{g_{ii}}p_{j}(t) + \frac{\nu_i}{g_{ii}}}}\right)\right)
\label{chap5_updateAlgorithm}
\end{align}}
for $\gamma_{i}$, $g_{ji}$, $\nu_i>0$, for any initial state $p_{i}(0)>0$ and for any proportionality constant, $k_{i}>0$.
\end{lemma}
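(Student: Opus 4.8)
The plan is to put the Foschini--Miljanic dynamics \eqref{chap5_updateAlgorithm} into matrix form and to reduce the claim to a diagonal Lyapunov inequality for the matrix $A=KH$. With $C$ as in \eqref{C}, $K=\diag(k_{i})$, and $\bm{\eta}$ as in \eqref{SINRmatrix6}, the system \eqref{chap5_updateAlgorithm} reads $\dot{\mathbf{p}}(t)=-K(I-C)\mathbf{p}(t)+K\bm{\eta}=-A\mathbf{p}(t)+K\bm{\eta}$. Since $\rho(C)<1$, the matrix $H=I-C$ is invertible with $H^{-1}\geq 0$, so there is a unique equilibrium $\mathbf{p}^{*}=H^{-1}\bm{\eta}>0$ satisfying $A\mathbf{p}^{*}=K\bm{\eta}$; putting $\mathbf{e}(t)=\mathbf{p}(t)-\mathbf{p}^{*}$ then yields the linear error system $\dot{\mathbf{e}}(t)=-A\mathbf{e}(t)$. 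Because the right-hand side of \eqref{chap5_updateAlgorithm} is strictly positive whenever $p_{i}=0$ and the remaining $p_{j}\geq 0$, the positive orthant is forward invariant, so restricting attention to $\mathbf{p}(0)>0$ is harmless.

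Next I would verify that $V$ in \eqref{lyapfunction} is a legitimate Lyapunov candidate. As $K$ is positive diagonal and $H$ is invertible, $A=KH$ is invertible, so $A^{T}DA\succ 0$ for every positive diagonal $D$; hence $V(\mathbf{e})=\mathbf{e}^{T}A^{T}DA\mathbf{e}>0$ for $\mathbf{e}\neq \mathbf{0}$, $V(\mathbf{0})=0$, and $V$ is radially unbounded. Differentiating along $\dot{\mathbf{e}}=-A\mathbf{e}$ gives
\begin{align*}
\dot V(\mathbf{e})=-\mathbf{e}^{T}A^{T}\bigl(DA+A^{T}D\bigr)A\mathbf{e},
\end{align*}
and since $A$ is invertible this form is negative definite if and only if the positive diagonal matrix $D$ can be chosen so that $DA+A^{T}D\succ 0$. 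Thus the whole lemma reduces to exhibiting such a $D$.

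This is the crux of the proof. The matrix $A=K(I-C)$ has nonpositive off-diagonal entries $-k_{i}C_{ij}\leq 0$ and strictly positive diagonal, i.e. it is a $Z$-matrix, and $A^{-1}=(I-C)^{-1}K^{-1}\geq 0$; hence $A$ is a nonsingular M-matrix. I would then invoke the standard characterization of nonsingular M-matrices via Lyapunov diagonal stability (see, e.g., \cite{1994:Topicsmatrix,1985:matrix,2005:PerronFrobenius}): a $Z$-matrix $A$ is a nonsingular M-matrix if and only if there exists a positive diagonal matrix $D$ with $DA+A^{T}D\succ 0$. Taking $D$ to be this matrix makes $\dot V$ negative definite, and Lyapunov's direct method (using the radial unboundedness of $V$) then yields global asymptotic stability of $\mathbf{e}=\mathbf{0}$, i.e. $\mathbf{p}(t)\to\mathbf{p}^{*}$ for all $\gamma_{i},g_{ji},\nu_{i},k_{i}>0$ and all $\mathbf{p}(0)>0$, which is precisely the assertion.

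The main obstacle is exactly the existence of the \emph{diagonal} weight $D$ solving $DA+A^{T}D\succ 0$: the generic Lyapunov argument only supplies a full, in general non-diagonal, symmetric $P\succ 0$ with $A^{T}P+PA\succ 0$, whereas the structure forced by \eqref{lyapfunction} demands a diagonal inner weight, and it is exactly the nonsingular-M-matrix property inherited from $\rho(C)<1$ that guarantees this stronger, diagonal solution exists. Everything else --- the positive-definiteness of $V$, the computation of $\dot V$, and the passage from these facts to global asymptotic stability --- is routine.
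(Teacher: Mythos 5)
Your proposal is correct and follows essentially the same route as the paper: write the dynamics as $\dot{\mathbf{e}}=-A\mathbf{e}$ with $A=KH$, compute $\dot V=-\mathbf{e}^{T}A^{T}(A^{T}D+DA)A\mathbf{e}$, and invoke the diagonal Lyapunov characterization of nonsingular M-matrices (condition $H_{24}$ in the cited reference) to obtain a positive diagonal $D$ with $A^{T}D+DA\succ 0$. You merely supply a few details the paper leaves implicit, namely the verification that $A$ is a $Z$-matrix with $A^{-1}=(I-C)^{-1}K^{-1}\geq 0$ and hence a nonsingular M-matrix.
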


\noindent  \begin{proof}
The power control algorithm \eqref{chap5_updateAlgorithm} can be written in matrix form as
\begin{align}\label{FMmatrixforminitial}
\mathbf{\dot{p}}(t)=-KH\mathbf{p}(t)+K\bm{\eta}
\end{align}
where $K = \diag(k_{i})$ and
\begin{align}
H_{ij}=\begin{cases}
1 & \text{, if $i=j$},\\
-\gamma_{i}\frac{g_{ji}}{g_{ii}} & \text{, if $i \neq j$}.
\end{cases}
\end{align}

\noindent Let $A=KH$, $\mathbf{q}=K\bm{\eta}$ and $\mathbf{e}(t)=\mathbf{p}(t)-\mathbf{p}^{*} \Rightarrow \mathbf{\dot{e}}(t)=\mathbf{\dot{p}}(t)$. Then, \eqref{FMmatrixforminitial} can be expressed as
\begin{align}\label{FMmatrixforminitial1}
\mathbf{\dot{p}}(t)=-A\mathbf{p}(t)+\mathbf{q} .
\end{align}
At equilibrium, $\mathbf{\dot{p}}=0$ and hence $A\mathbf{p}^{*}=\mathbf{q}$. Therefore, \eqref{FMmatrixforminitial1} is written as
\begin{align}
\mathbf{\dot{p}}(t) = \mathbf{\dot{e}}(t) =  -A\mathbf{p}(t)+ A\mathbf{p}^{*} = -A(\mathbf{p}(t)-\mathbf{p}^{*}) = - A \mathbf{e}(t),
\end{align}
giving $\mathbf{\dot{e}}(t) = - A \mathbf{e}(t)$. Consider the candidate Lyapunov function
\begin{align}\label{lyapcandidate}
V(\mathbf{e}) = \mathbf{e}^{T}A^{T}DA \mathbf{e} ,
\end{align}
where $D$ is a positive diagonal matrix. $V(\mathbf{e}) > 0$ $\forall$ $\mathbf{e}(t)\neq 0$ and
$V(\mathbf{e}) = 0$ for $\mathbf{e} = 0$. In addition, $V(\mathbf{e})$ is radially unbounded, i.e., $V(\mathbf{e}) \rightarrow \infty , \text{ when } || \mathbf{e} || \rightarrow \infty$. Therefore, if $\dot{V}(\mathbf{e}(t)) < 0$, $\forall$ $\mathbf{e}(t)\neq 0$, then the equilibrium point is globally asymptotically stable.
\begin{align*}
\dot{V}(\mathbf{e}(t)) &= \mathbf{\dot{e}}^{T}(t)A^{T}DA \mathbf{e}(t)+\mathbf{e}^{T}(t)A^{T}DA \mathbf{\dot{e}}(t) \\
&= (- A \mathbf{e}(t))^{T}A^{T}DA \mathbf{e}(t)+ \mathbf{e}^{T}(t)A^{T}DA (- A \mathbf{e}(t)) \\
&= - \mathbf{e}^{T}(t)A^{T}(A^{T}D+DA)A \mathbf{e}(t)
\end{align*}
From the equation above $\dot{V}(\mathbf{e}(t)) < 0$, $\forall$ $\mathbf{e}(t)\neq 0$, if and only if $A^{T}D+DA \succ 0$. Since $A$ is a nonsingular M-matrix, there exists a positive diagonal matrix P such that the matrix $A^{T}P+PA$ is positive definite \cite{1994:mMatrices} (condition $H_{24}$, $p.136$). As a result, there exists a positive diagonal matrix D for which $A^{T}D+DA \succ 0$ and therefore \eqref{lyapcandidate} is a Lyapunov function that stabilizes the system given by \eqref{FMmatrixforminitial}.
\end{proof}

%
%
\section{Joint Distributed Power and Rate Control}\label{algorithm}

%
%
\subsection{System Feasibility Region}\label{feasibilityRegion}

In this subsection, we will provide some intuition on the \emph{SINR} (and  \emph{capacity}) regions of a network, based on the model described in Section \ref{model}. More specifically, we show that they are non-concave sets with respect to the transmitters' power and we suggest that it is possible to reach regions within the convex hull of these non-concave sets. Note that previous works, such as \cite{2003:genie_seeking}, converted the capacity regions to concave sets by working in the high SINR regime, and hence assuming that the capacity is given by $\log_{2}(\Gamma_i)$, instead of $\log_{2}(1+\Gamma_i)$, where $\Gamma_{i}$ is given by formula \eqref{SINR1}.

\begin{defn}\label{feasibility}
(Feasibility)\cite{2003:Gunnarsson}. A set of target SINRs $\Gamma_{i}$ is said to be feasible with respect to a network, if it is possible to assign transmitter powers $p_{i}\geq0$ so that the requirement in inequality \eqref{SINR_geq} is met for all nodes transmitting in the network. Analogously, the power control problem is said to be feasible under the same conditions. Otherwise, the target SINRs and the power control problem are said to be infeasible.
\end{defn}

\noindent We consider a wireless ad hoc network with $M$ transmitters operating in a single channel. Let $\mathbf{p}$ be the $M$-dimensional power vector with $p_{i}$ the power of transmitter $i$ in the network. Moreover, we include upper bounds on the power level, since nodes operate with small batteries which impose limitations on the transmission power. Let $\mathbf{p_{max}}$ be an M-dimensional vector that contains the maximum power level for each transmitter in the network. Note that $p_{i}$ lies in the closed interval $[0,p_{i,\max}]$. We define $\Pi$ to be the set of all possible powers levels in the network, namely, $\Pi(\mathbf{p})\triangleq\{\mathbf{\mathbf{p}}: \mathbf{0}\leq \mathbf{p}\leq \mathbf{p_{max}}\}$. By definition, $\Pi$ is described geometrically by a rectangle for two transmitters only and an M-orthotope\index{orthotope} for M transmitters. Therefore, the set of powers is always a convex set.


\noindent We denote by ${\Gamma}_{i}$ the SINR of transmitter $i$ given by \eqref{SINR1}, and ${\Gamma}_{i,max}$ its
maximum SINR, defined by
\begin{align}
\Gamma_{i,max}\triangleq\{\Gamma_{i}: p_{i}=p_{i,max} \ , p_{j}=0 \ \forall j \in \mathcal{T}, j\neq i\}.
\end{align}
We denote by $\mathbf{\Gamma}$ the $M$-dimensional vector with $\Gamma_{i}$ the SINR of transmitter $i$. We define the set $\Psi$ to be all
feasible data rates that can be achieved in the network at any instant by simultaneous transmissions, i.e., $\Psi(\mathbf{\Gamma})\triangleq\{\mathbf{\mathbf{\Gamma}}: \mathbf{p}\in \Pi\}$. Since $\Gamma_{i}$'s are not concave functions of the transmitters' power levels, depending on the network geometry and topology, $\Psi$ is not necessarily a convex set. Consequently, we define the set $\Phi$
to be the convex hull of $\Psi$. i.e., $\Phi \triangleq \Co(\Psi)$ .

\noindent According to the definition of $\Gamma_{i}$ and the assumption that there exists thermal noise in the network, the set $\Phi$ is always closed and bounded. Hence, although the SINR formula is generally a non convex function, the convex hull operator transforms the SINR region, $\Phi$, into a convex set. Formally, based on the above definitions, we define the \emph{SINR region} of a wireless network as the convex hull of all the feasible SINRs.

\noindent If $\Phi-\Psi=\emptyset$, then $\Psi$ is a convex set, which means that the system performance cannot be improved further. If, for a chosen $\mathbf{\Gamma}$ there exists a solution to the system for simultaneous transmissions, then $\mathbf{\Gamma} \in \Psi$. However, there exist cases where $\mathbf{\Gamma} \in \Phi-\Psi$. In such cases, a scheme that partitions time or  introduces multiple channels should be introduced.

\noindent By Carath\'{e}odory's theorem \cite{2003:convexAnalysisAndOptimisation}, if there is a point $\mathbf{x} \in \mathbb{R}^{M}$, lying in the convex hull of the set $\Psi$, there exists a subset $\Psi '$ of $\Psi$ consisting of no more than $M+1$ points such that $\mathbf{x}$ lies in the convex hull of $\Psi '$. Therefore, any point $\mathbf{{\Gamma}} \in \mathbb{R}^M$, that lies in $\Phi$, can be achieved with a linear combination of no more than $M+1$ points in $\Psi$. This suggests that, by efficiently partitioning the time between different states, target SINRs can be achieved that would otherwise be infeasible. Also, the solution consists of at most $S+1$ different states, where $S$ is the number of transmitters in the network.

\begin{remark}
The same arguments can be used for the case where the capacity $\mathrm{R}_{i}$ is considered, due to the monotonic relation with SINR (since $\mathrm{R}_{i}=\log_{2}\left(1+ \Gamma_{i} \right)$ - Shannon's capacity). Again, capacities are not concave functions of the transmitters' power levels. They depend on the network geometry and topology, and hence, the \emph{capacity region} of a wireless network is defined as the convex hull of all the feasible capacities in the network.
\end{remark}


\begin{figure*}[tp!]
\begin{center}
\subfigure[A wireless ad-hoc network is depicted, consisting of $n=4$ nodes and hence two communication pairs.]
{\includegraphics[width=3in]{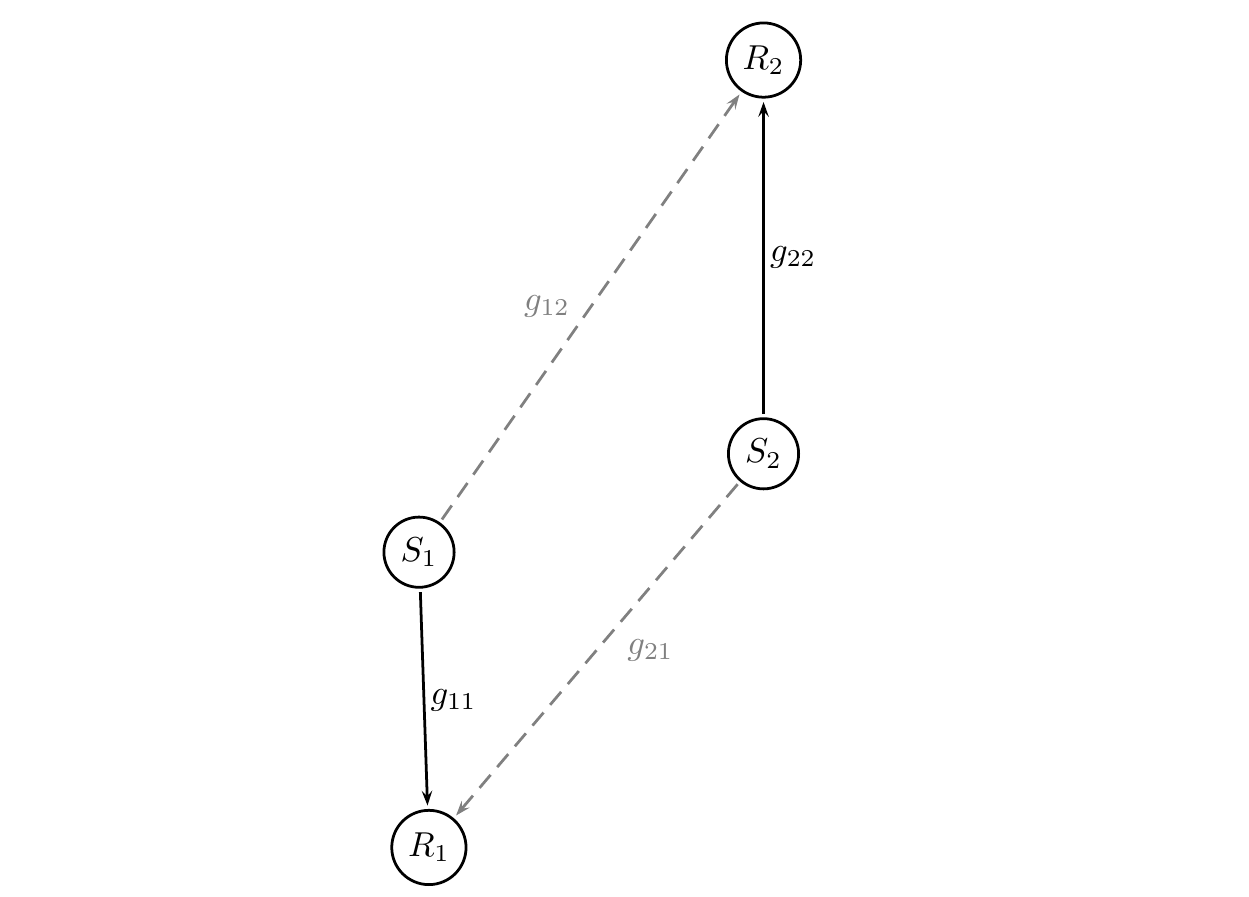}\label{example1a}}
\hfill
\subfigure[In region $K$, simultaneous transmissions can take place $(\mathbf{\mathcal{C}}\in \Psi)$, whereas in region $L$, a higher throughput can be achieved by partitioning the time between different states of data rates $(\mathbf{\mathcal{C}} \in \Phi-\Psi))$. In region $M$, there is no feasible throughput for the network considered $(\mathbf{\mathcal{C}} \notin \Phi)$.]
{\includegraphics[width=3in]{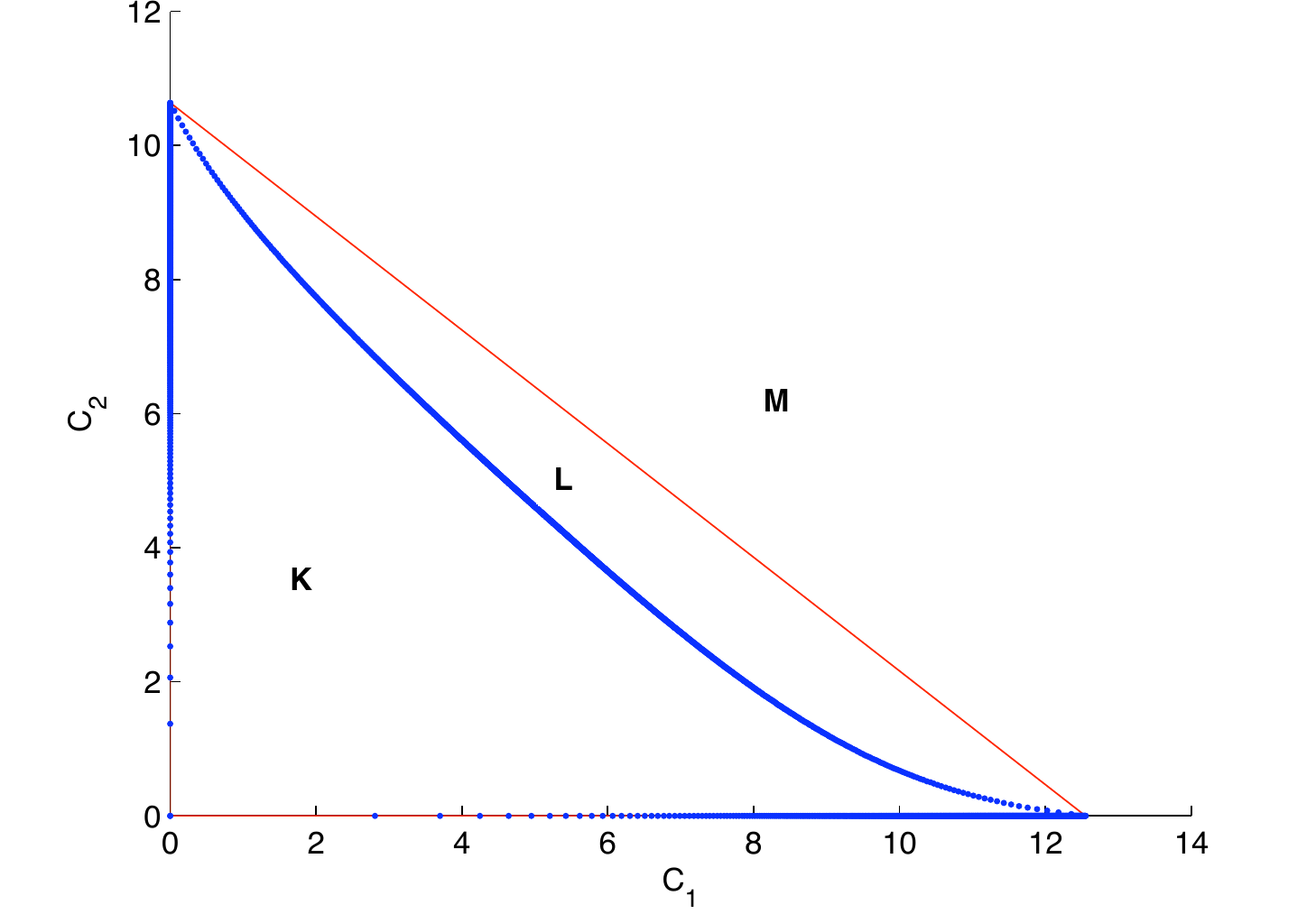}\label{example1b}}
\subfigure[A wireless ad-hoc network is depicted, consisting also of $n=4$ nodes and hence two communication pairs.]
{\includegraphics[width=3in]{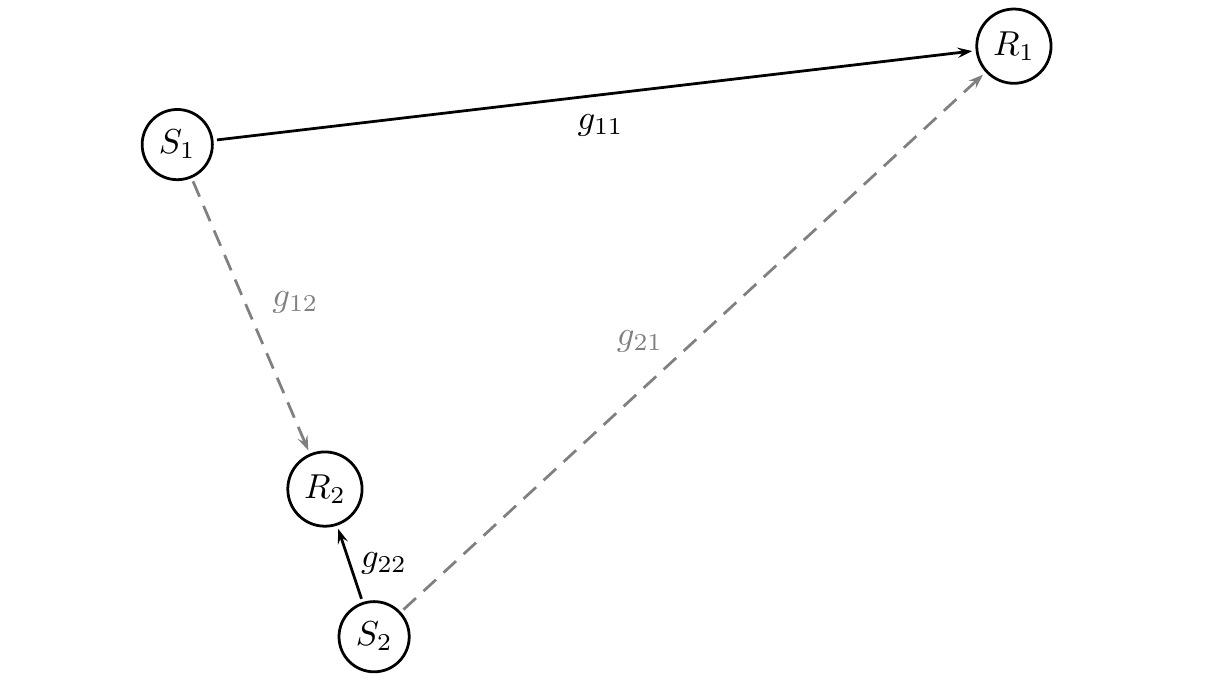}\label{example2a}}
\hfill
\subfigure[The sets of feasible data rates ($\Psi \subset \Phi$) for this network are illustrated. In order to achieve the set of data rates in region $L$, one of the transmitters switches to a different mode of switching between states, whereas the other keeps transmitting at its maximum power.]
{\includegraphics[width=3in]{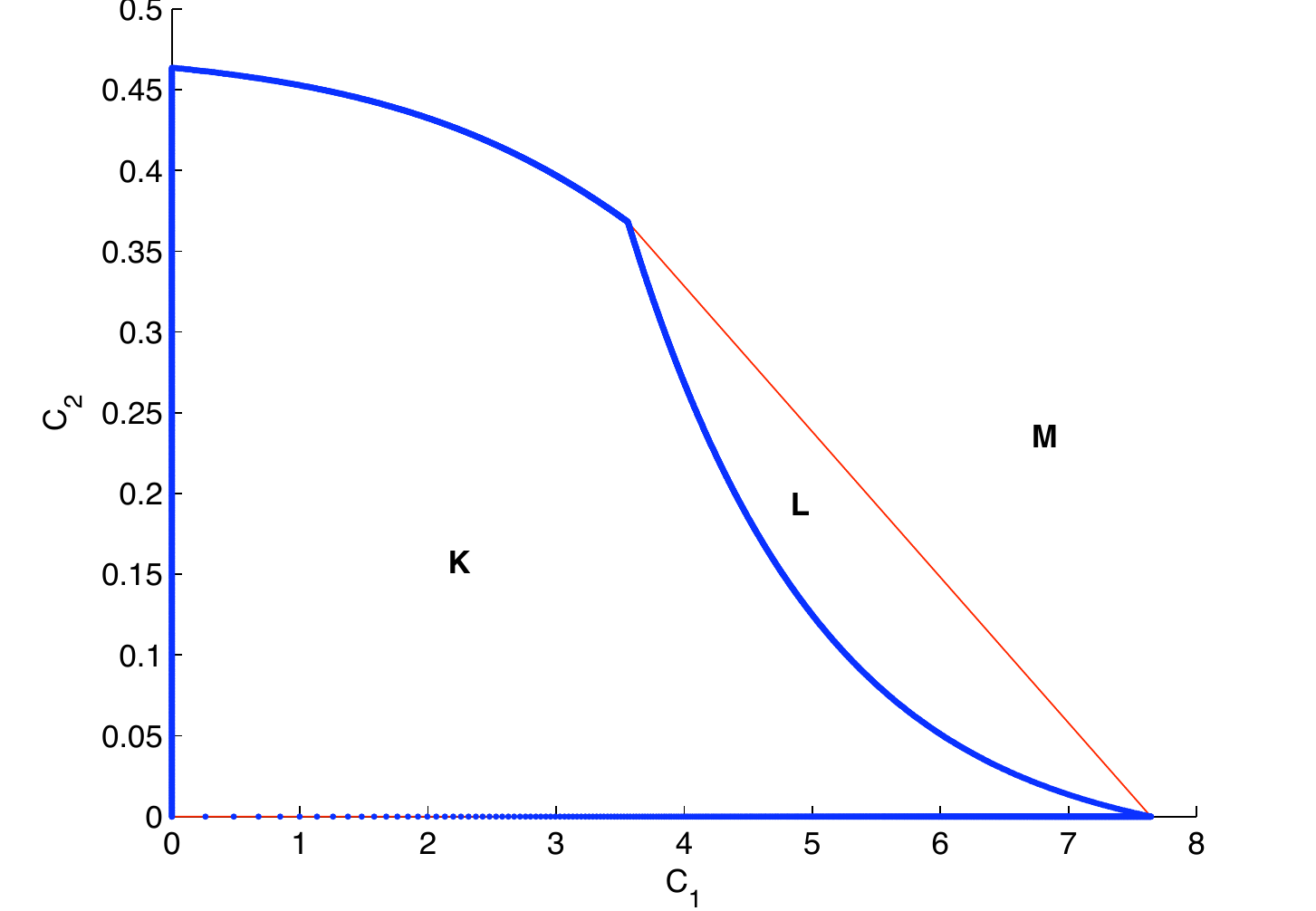}\label{example2b}}
\subfigure[A wireless ad-hoc network is depicted, consisting of $n=6$ nodes and hence two communication pairs.]
{\includegraphics[width=3in]{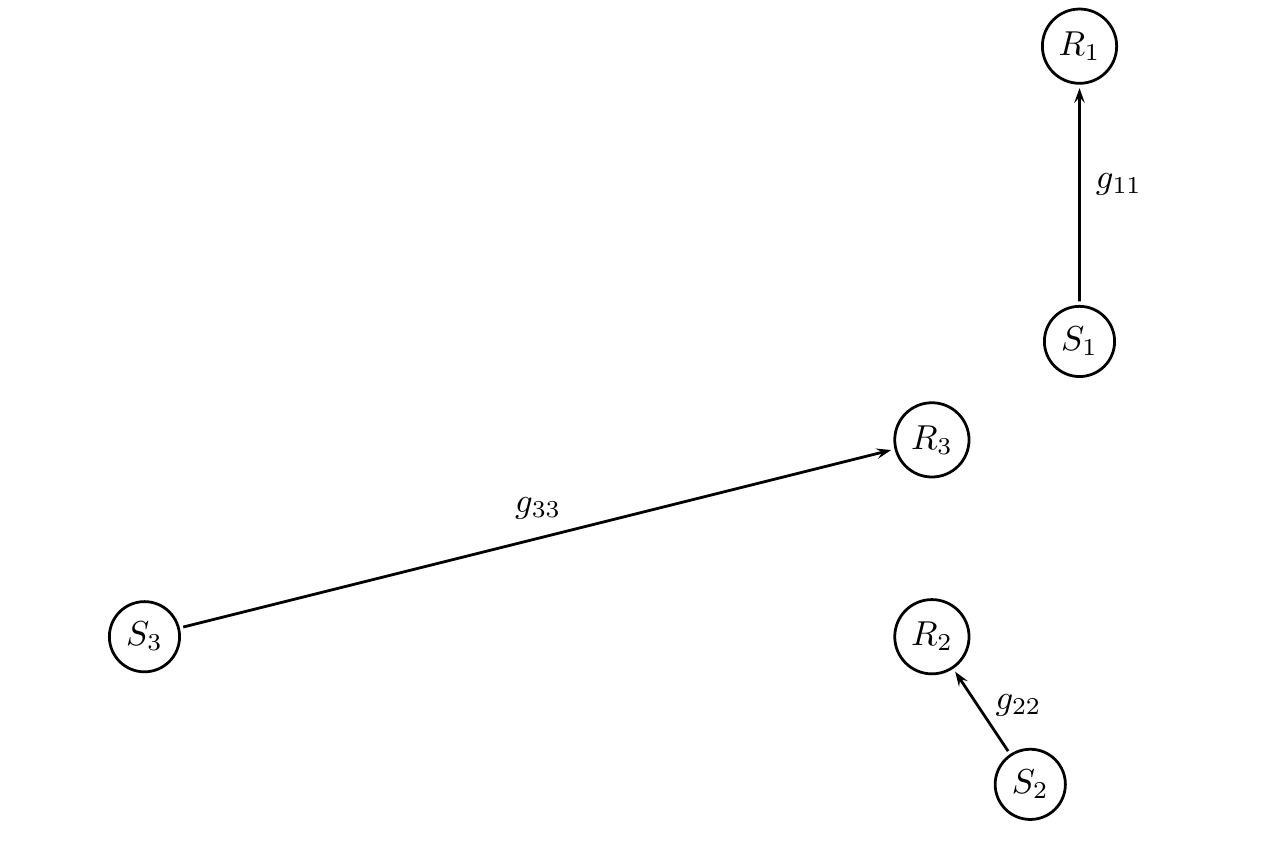}\label{example3a}}
\hfill
\subfigure[The two pairs for which the receiver-transmitter distance is small can transmit simultaneously and $\Phi-\Psi=\emptyset$, whereas the third, due to the large distance between the transmitter and receiver has a low throughput and it deteriorates when any of the other pairs transmits simultaneously.]
{\includegraphics[width=3in]{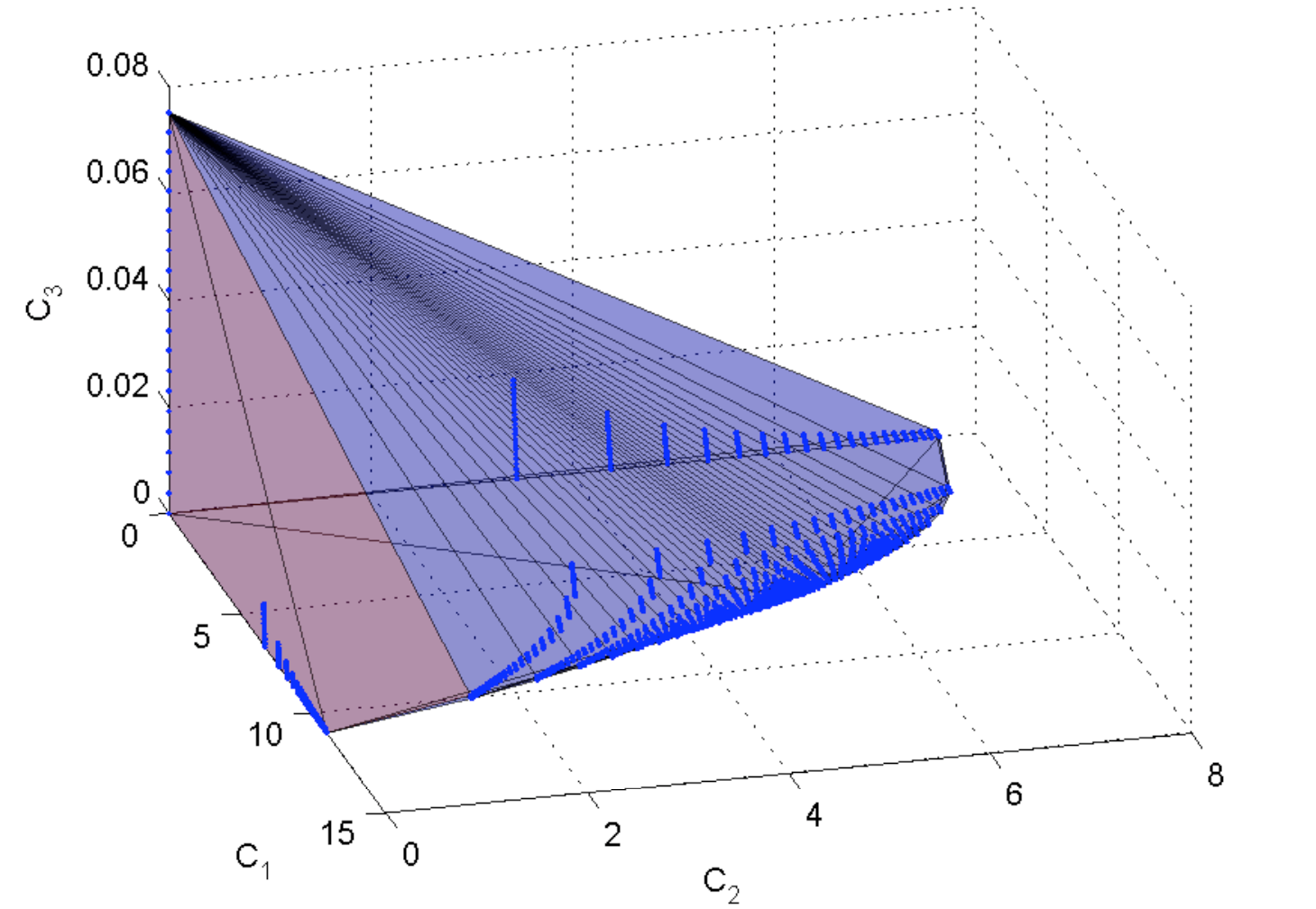}\label{example3b}}
\caption{On the \emph{left-hand side} the wireless ad-hoc network is depicted consisting of the communication pairs. The transmitters appear as nodes with an $S$ and the receivers appear as nodes with an $R$. Communication pairs have the same index and are connected by a solid line \{$S_{i}\rightarrow R_{i}$\}. The grey dotted arrows denote the interference invoked. On the \emph{right-hand side}, the feasible data rates of the communication pairs along with the capacity region of the network are illustrated.}
\label{example1}
\end{center}
\end{figure*}


\noindent Some representative examples of networks are presented in Figure \ref{example1} in order to show the ideas aforementioned. Particularly, we show that there exist networks for which it is possible to attain data rates that would be otherwise impossible, with time division or simultaneous transmissions over many channels. In the following examples (Figure \ref{example1}), on the \emph{left-hand side} the wireless ad-hoc network is depicted consisting of the communication pairs. Each of these pairs consists of a transmitter and a receiver. The transmitters appear as nodes with an $S$ and the receivers appear as nodes with an $R$. Communication pairs have the same index and are connected by a solid line \{$S_{i}\rightarrow R_{i}$\}. The grey dotted arrows denote the interference invoked and it is shown to some of the connections for illustration purposes only. On the \emph{right-hand side}, the feasible data rates of the communication pairs along with the capacity region of the network are illustrated.

%
%
\subsection{Problem formulation}\label{formulation}

\noindent We now formulate the problem of distributed power control in the case which we have more than one channels available and the average SINR of the system is required to meet certain QoS specifications. Therefore, we have a variable SINR threshold in each channel, while the average of all the SINR targets should meet the overall required QoS. 

Let  $i\in \mathcal{T}$ in channel $k\in \mathcal{C}$, where $\mathcal{T}$ and $\mathcal{C}$ are the sets of wireless transmitters and available channels, respectively.  As before, $\gamma_{i}$ denotes the desired average SINR of user $i$ for all $N$ channels. We introduce $x_{i,k}$ to be the allotted (or desired) SINR target for user $i$ and $w_{i,k}$ is the \emph{effective} interference at receiver $i$ in channel $k$, and is given by
\begin{align}\label{wik}
w_{i,k}=\sum_{j\in \mathcal{T}_{-i}}\frac{g_{ji}}{g_{ii}}p_{j,k}+ \frac{\nu_i}{g_{ii}}.
\end{align}
Therefore, for each channel $k$ we require that the SINR is greater than or equal to $x_{i,k}$:
\begin{align}
\frac{p_{i,k}}{w_{i,k}} \geq x_{i,k} ,\quad	x_{i,k} \geq 0
\end{align}
where $p_{i,k}$ is the power of user $i$ in channel $k$ The target SINR ($\gamma_i$) should be met on average over all $N$ channels, i.e.,
\begin{align}
\frac{1}{N}\sum_{k\in \mathcal{C}}x_{i,k} \geq \gamma_{i} .
\end{align}
Therefore, we are required to develop an algorithm that adapts the targeted SINR in each channel, accounting for the individual channel's conditions, in addition to updating the power.

%
%
%
%
\subsection{Main results} \label{results}

\noindent We propose a distributed algorithm that not only updates the power level $p_{i,k}$ of each node $i$ in a certain channel $k$, but also updates the desired SINR $x_{i,k}$ for each channel in order to take advantage of the channel conditions. We show that this algorithm is able to find feasible solutions that would be inaccessible via conventional power control algorithms (e.g., the FM algorithm) that operate in a single channel only. The proposed algorithm aims to find an equilibrium for the system consisting of all the communication pairs making use of all the available channels, if a feasible solution exists.

\begin{defn}\label{def:feasibleMultiple}
A feasible solution exists if there exist SINR targets
$\mathbf{\hat{x}}_{k}=[\begin{array}{cccc}
\hat{x}_{1,k} & \hat{x}_{2,k} & \ldots & \hat{x}_{N,k}
\end{array}]^{T}$ such that
\begin{align}
\rho\left(C(\mathbf{\hat{x}}_{k})\right)<1, \ ~\forall~k\in \mathcal{C}
\end{align}
and
\begin{align}
\frac{1}{N}\sum_{k\in \mathcal{C}}\hat{x}_{i,k}=\gamma_{i} , \ ~\forall~i\in \mathcal{T} .
\end{align}
\end{defn}

\noindent In the following theorem we state that the proposed distributed algorithm is asymptotically stable and converges as long as the allotted SINRs belong to a feasible solution. 
\begin{theorem}\label{th:1}
Assume that  $\gamma_{i}$, $g_{ji}$, $\nu_i \in \mathds{R}_{+}$, for any initial state $p_{i}(0)>0$ and for any proportionality constants $c_{i,k},b_{i,k},\zeta_{i} \in \mathds{R}_{+}$, and suppose the spectral radius of matrix $C(\mathbf{x}_{k}(t))~\forall~ t$ in \eqref{C} remains less than $1$. Then, the distributed power control formula 
\begin{align}\label{pdot_formula}
\dot{p}_{i,k}(t)=c_{i,k} \left(-p_{i,k}(t)+x_{i,k}(t)\left({\sum_{j\neq i, j \in \mathcal{T}}{\frac{g_{ji}}{g_{ii}}p_{j,k}(t) + \frac{\nu_{i,k}}{g_{ii}}}}\right)\right),
\end{align}
and the distributed allotted SINR target update formula
\begin{align}\label{xdot_formula}
\dot{x}_{i,k}(t) = b_{i,k}(t) \left[\frac{\zeta_{i}}{N}\left( \gamma_{i}- \frac{1}{N} \sum_{k\in \mathcal{C}}x_{i,k}(t) \right)- w_{i,k}(t)\left(x_{i,k}(t)w_{i,k}(t)-p_{i,k}(t) \right)\right] \end{align}
converge to a feasible solution.
\end{theorem}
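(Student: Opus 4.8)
The plan is to build one Lyapunov function for the coupled dynamics \eqref{pdot_formula}--\eqref{xdot_formula}, re-using the $M$-matrix argument behind Lemma \ref{chap5_lemma}, and to finish with LaSalle's invariance principle. First I would pin down the equilibria: $\dot p_{i,k}=0$ in \eqref{pdot_formula} forces $p_{i,k}=x_{i,k}w_{i,k}$, i.e. the SINR in channel $k$ equals its allotted target $x_{i,k}$; substituting this into \eqref{xdot_formula} kills the second bracketed term, so $\dot x_{i,k}=0$ forces $\frac1N\sum_{k\in\mathcal C}x_{i,k}=\gamma_i$. Since the standing hypothesis gives $\rho(C(\mathbf x_k))<1$, the relation $p_{i,k}=x_{i,k}w_{i,k}$ has the unique positive solution $\mathbf p_k^{\ast}=(I-C(\mathbf x_k))^{-1}\bm\eta_k$. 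Hence the equilibria of the combined system are exactly the feasible solutions of Definition \ref{def:feasibleMultiple}.

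The key observation is that \eqref{xdot_formula} is a weighted gradient flow. Writing $\varepsilon_{i,k}:=x_{i,k}w_{i,k}-p_{i,k}$ and $\bar x_i:=\frac1N\sum_{k}x_{i,k}$, differentiation shows that for any positive constants $d_i$,
\begin{align*}
\dot x_{i,k}=-\,\frac{b_{i,k}(t)}{d_i}\,\frac{\partial\Psi}{\partial x_{i,k}},\qquad \Psi(\mathbf p,\mathbf x):=\frac12\sum_{i\in\mathcal T}d_i\Big[\zeta_i\big(\gamma_i-\bar x_i\big)^2+\sum_{k\in\mathcal C}\varepsilon_{i,k}^{2}\Big],
\end{align*}
the identity being exact because $w_{i,k}$ depends on $\mathbf p$ but not on $\mathbf x$. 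I would take $V:=\Psi$ as candidate Lyapunov function; $V\ge0$, with $V=0$ precisely on the equilibrium set above.

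Along trajectories $\dot V=\nabla_{\mathbf x}\Psi\cdot\dot{\mathbf x}+\nabla_{\mathbf p}\Psi\cdot\dot{\mathbf p}$. The first term equals $-\sum_{i,k}\frac{b_{i,k}(t)}{d_i}(\partial_{x_{i,k}}\Psi)^2\le 0$. For the second, use $\dot p_{i,k}=c_{i,k}\varepsilon_{i,k}$; collecting the channel-$k$ part in matrix form, with $\tilde K_k=\diag(c_{i,k})$, $D=\diag(d_i)$, $H(\mathbf x_k)=I-C(\mathbf x_k)$ and $\bm\varepsilon_k$ the vector with entries $\varepsilon_{i,k}$,
\begin{align*}
\nabla_{\mathbf p_k}\Psi\cdot\dot{\mathbf p}_k=-\tfrac12\,\bm\varepsilon_k^{T}\big(D\,H(\mathbf x_k)\tilde K_k+\tilde K_k H(\mathbf x_k)^{T}D\big)\bm\varepsilon_k .
\end{align*}
Because $\rho(C(\mathbf x_k(t)))<1$ for all $t$, $H(\mathbf x_k(t))\tilde K_k$ is a nonsingular $M$-matrix, so — exactly as in the proof of Lemma \ref{chap5_lemma}, via condition $H_{24}$ of \cite{1994:mMatrices} — one can choose the positive diagonal $D$ making this bracket positive definite; then $\dot V\le 0$. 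Moreover $\dot V=0$ forces $\bm\varepsilon_k=0$ for all $k$ and $\partial_{x_{i,k}}\Psi=0$ for all $i,k$, and inserting $\varepsilon_{i,k}=0$ into the latter yields $\bar x_i=\gamma_i$; so $\{\dot V=0\}$ equals the equilibrium set, which is invariant. After checking that trajectories stay bounded, LaSalle's invariance principle then gives convergence of \eqref{pdot_formula}--\eqref{xdot_formula} to a feasible solution.

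The hard part is the single diagonal scaling $D$: the $M$-matrix $H(\mathbf x_k(t))\tilde K_k$ drifts with $\mathbf x(t)$ and differs across channels, yet $V$ must use one $D$, so one actually needs $D\,H(\mathbf x_k(t))\tilde K_k+\tilde K_k H(\mathbf x_k(t))^{T}D\succ 0$ uniformly in $t$ and over $k\in\mathcal C$. This is precisely where the hypothesis $\rho(C(\mathbf x_k(t)))<1~\forall t$ is indispensable — it confines the relevant matrices to a compact family of $M$-matrices admitting (possibly after restricting the trajectory to a compact subset strictly inside the feasibility region) a common scaling, which also yields the boundedness used above. It is likewise the reason LaSalle rather than a strict-Lyapunov argument is needed: the feasible solutions are not isolated but form the manifold cut out by $\frac1N\sum_k x_{i,k}=\gamma_i$ inside the feasibility region, so $\dot V$ cannot be strictly negative definite there.
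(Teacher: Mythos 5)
Your proposal is correct and follows essentially the same route as the paper: the same utility/Lyapunov function (your $\Psi$ is the paper's $U$ up to the factor $\tfrac12$ and the placement of the constants $c_{i,k}$), the same split into a gradient-flow term in $\mathbf{x}$ and an M-matrix quadratic form in $\mathbf{p}$ handled via the diagonal-scaling condition $H_{24}$ of \cite{1994:mMatrices}. The points where you go beyond the paper --- the explicit characterization of the equilibrium set, the LaSalle step to pass from $\dot V\le 0$ to actual convergence, and the flagged need for a single scaling $D$ uniform in $t$ and over the channels $k$ --- are precisely the steps the paper leaves implicit or delegates to the citations \cite{2005:BambosIT,2007:olama}, so they strengthen rather than diverge from the published argument.
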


\begin{proof}
For each transmitter $i\in \mathcal{T}$, we let the utility function be
\begin{align}\label{Ui_single}
U_{i}:=d_{i}\zeta_{i} \left( \gamma_{i}- \frac{1}{N} \sum_{k\in \mathcal{C}}x_{i,k} \right)^{2}+ \sum_{k\in \mathcal{C}}d_{i}\left[c_{i,k}\left(x_{i,k}w_{i,k}-p_{i,k} \right)\right]^{2}
\end{align}
where $c_{i,k}$, $d_{i}$ and $\zeta_{i}$ are positive constants. 

Define $\mathbf{e}_{k} \triangleq \mathbf{p}_{k}-\mathbf{p}_{k}^{*}$, $\mathbf{p}_{k} \triangleq \left(
                    \begin{array}{cccc}
                      p_{1}^{k} & p_{2}^{k} & \ldots & p_{M}^{k} \\
                    \end{array}
                  \right)^{T}$, $\mathbf{x}_{k} \triangleq \left(
                    \begin{array}{cccc}
                      x_{1}^{k} & x_{2}^{k} & \ldots & x_{M}^{k} \\
                    \end{array}
                  \right)^{T}$, $D\triangleq \diag(d_{i})$ and $A(\mathbf{x}_{k})$ is the same as before, but with $x_{i,k}$ in place of $\gamma_{i}$ for each node respectively. Explicitly, since A=KH, we express $H$ as
\begin{align}
H_{ij}=\begin{cases}
1 & \text{, if $i=j$},\\
-x_{i,k}\frac{g_{ji}}{g_{ii}} & \text{, if $i \neq j$}.
\end{cases}
\end{align}

Therefore the utility function for the whole network, with $M$ communication pairs, is given by,
\begin{align*}
U = \sum_{i=1}^{M}U_{i} &= \sum_{i=1}^{M}\left[ d_{i}\zeta_{i} \left( \gamma_{i}- \frac{1}{N} \sum_{k \in \mathcal{C}}x_{i,k} \right)^{2}+ \sum_{k\in \mathcal{C}}d_{i}\left[c_{i,k}\left(x_{i,k}w_{i,k}-p_{i,k} \right)\right]^{2} \right] \\
&= \sum_{i=1}^{M}d_{i}\zeta_{i} \left( \gamma_{i}- \frac{1}{N} \sum_{k \in \mathcal{C}}x_{i,k} \right)^{2}+\sum_{i=1}^{M}\sum_{k \in \mathcal{C}}d_{i}\left[c_{i,k}\left(x_{i,k}w_{i,k}-p_{i,k} \right)\right]^{2} \\
&= \sum_{i=1}^{M}d_{i}\zeta_{i} \left( \gamma_{i}- \frac{1}{N} \sum_{k \in \mathcal{C}}x_{i,k} \right)^{2}+\sum_{k \in \mathcal{C}}\sum_{i=1}^{M}d_{i}\left[c_{i,k}\left(x_{i,k}w_{i,k}-p_{i,k} \right)\right]^{2} \\
&=\underbrace{\sum_{i=1}^{M}d_{i}\zeta_{i} \left( \gamma_{i}- \frac{1}{N} \sum_{k \in \mathcal{C}}x_{i,k} \right)^{2} }_{U_{1}} +\underbrace{\sum_{k\in \mathcal{C}}\left(\mathbf{e}_{k}^{T}A^{T}(\mathbf{x}_{k})DA(\mathbf{x}_{k})\mathbf{e}_{k}\right)}_{U_{2}} \label{Ulast}
\end{align*}

\noindent By definition, the utility function is nonnegative, i.e., 
\begin{align}
U > 0, ~~\forall ~\mathbf{e}_{k} \neq 0~\mbox{and}~ \frac{1}{N} \sum_{k\in \mathcal{C}}x_{i,k} \neq \gamma_{i}~ \forall i.
\end{align}  
Also, 
\begin{align}
U = 0~ \mbox{for}~ \mathbf{e}_{k} =0~\mbox{and}~ \frac{1}{N} \sum_{k\in \mathcal{C}}x_{i,k} = \gamma_{i}~ \forall i.
\end{align}  
In addition, $U$ is radially unbounded, i.e., $U \rightarrow \infty$, when $|| \mathbf{e}_{k} || \rightarrow \infty$ or/and $|| \mathbf{x}_{k} || \rightarrow \infty$. 

\noindent In what follows, we find the conditions for which $\dot{U} < 0$, $\forall$ $\mathbf{e}_{k} \neq 0$ and $\sum_{k\in \mathcal{C}}x_{i,k}/N \neq \gamma_{i}$. That is,
\begin{align}
\frac{dU}{dt} = \sum_{i=1}^{M}\sum_{k\in \mathcal{C}} \frac{\partial U}{\partial x_{i,k}}\frac{d  x_{i,k}}{dt} + \sum_{k\in \mathcal{C}}\frac{\partial U}{\partial \mathbf{e}_{k}}\frac{d  \mathbf{e}_{k}}{dt} < 0 .
\end{align}
for all $\mathbf{e}_{k}\neq 0$ and $\sum_{k}x_{i,k}/N \neq \gamma_{i}$. In Lemma \ref{chap5_lemma}, we showed that the Lyapunov function \eqref{lyapfunction} that proves stability for the FM algorithm is of the same form as $U_{2}^{k}$; namely, $U_{2}^{k}=\mathbf{e}_{k}^{T}A^{T}(\mathbf{x}_{k})DA(\mathbf{x}_{k})\mathbf{e}_{k}$. Convergence in case the matrix A is time varying, $A(\mathbf{x}_{k}(t))$, is proven in \cite{2005:BambosIT,2007:olama}. Therefore, there exists positive diagonal matrix $D$ such that $A^{T}D+DA>-Q$ for any positive definite matrix $Q$, and given that $\rho(C(\mathbf{x}_{k}(t)))<1$, such that,
\begin{align}
\sum_{k\in \mathcal{C}}\frac{\partial U}{\partial \mathbf{e}_{k}}\frac{d  \mathbf{e}_{k}}{dt} < 0 , \ \ \ \ \ \forall \ \  \mathbf{e}_{k} \neq 0 .
\end{align}
$U_{1}^k$ is independent of $ \mathbf{e}_{k}(t)$ and therefore $\dot{U}< 0 ,~\forall ~\mathbf{e}_{k}(t) \neq 0$. Thus, the power update formula \eqref{pdot_formula} stabilizes the system. Now, we show that the update formula for $x_{i,k}$ establishes that
\begin{align}
\sum_{i=1}^{M}\sum_{k\in \mathcal{C}} \frac{\partial U}{\partial x_{i,k}}\frac{d  x_{i,k}}{dt} < 0 , \ \ \ \ \ \ \frac{1}{N} \sum_{k\in \mathcal{C}}x_{i,k} \neq \gamma_{i}~ \forall i.
\end{align}

\noindent For a single transmitter,
\begin{align}
\frac{\partial U_{i}}{\partial x_{i,k}}= &-\frac{2}{N}d_{i}\zeta_{i}\left( \gamma_{i}- \frac{1}{N} \sum_{k\in \mathcal{C}}x_{i,k} \right) +2d_{i}w_{i,k}\left(x_{i,k}w_{i,k}-p_{i,k} \right) .
\end{align}
\begin{align}
\sum_{k\in \mathcal{C}}\left(\frac{\partial U_{i}}{\partial x_{i,k}}\dot{x}_{i,k}\right) &= \sum_{k\in \mathcal{C}} \left[-\frac{2}{N}d_{i}\zeta_{i}\left( \gamma_{i}- \frac{1}{N} \sum_{k\in \mathcal{C}}x_{i,k} \right)+2d_{i}w_{i,k}\left(x_{i,k}w_{i,k}-p_{i,k} \right)\right]\dot{x}_{i,k} \\
&= - 2d_{i} \sum_{k\in \mathcal{C}} \left[\frac{\zeta_{i}}{N}\left( \gamma_{i}- \frac{1}{N} \sum_{k\in \mathcal{C}}x_{i,k} \right)-w_{i,k}\left(x_{i,k}w_{i,k}-p_{i,k} \right)\right]\dot{x}_{i,k} \label{5.48}\\
&= -2d_{i} \sum_{k\in \mathcal{C}}b_{i,k}\left[\frac{\zeta_{i}}{N}\left( \gamma_{i}- \frac{1}{N} \sum_{k\in \mathcal{C}}x_{i,k} \right)-w_{i,k}\left(x_{i,k}w_{i,k}-p_{i,k} \right)\right]^{2}<0, \label{5.49}
\end{align}
\noindent where \eqref{5.49} is obtained by substituting \eqref{xdot_formula} into \eqref{5.48}. Since $x_{i,k}$ does not appear in the other nodes' utility functions, it is shown that for each user, the allotted SINR target's differential equation makes the rate of change of the utility function of node $i$ with respect to $x_{i,k}$ negative.
\end{proof}
\vspace{0.5cm}
\noindent We have proved the update formulae converge to a feasible solution under the condition $\rho(C(\mathbf{x}_{k}))<1$. Hence, the suggested algorithm converges to a feasible solution as long as the desired SINRs belong to the feasible set of SINRs. 

\noindent However, this might not be always the case, and $\rho(C(\mathbf{x}_{k}))>1$ for some time-interval. In what follows, we introduce a condition on the update formulae such that the algorithm converges to a feasible solution, if one exists, even if $\rho(C(\mathbf{x}_{k}))>1$ over bounded time intervals.

\begin{theorem}
Define
\begin{align*}
\theta_{i}(t):=\gamma_{i}-\frac{1}{N}\sum_{k\in \mathcal{C}}x_{i,k}(t) ,
\end{align*}
where $N$ is the number of available channels, $x_{i,k}$ is the allotted SINR target for transmitter $i\in\mathcal{T}$ in channel $k\in\mathcal{C}$ and $\gamma_{i}$ is the average capture ratio desired at receiver $i$. The distributed power control formula 
\begin{align}\label{pdot_formula2}
\dot{p}_{i,k}(t)=\begin{cases}
0 & \mbox{, if } \theta_i\neq0 ~\mbox{and}~\dot{x}_{i,k}=0, ~\forall k\in\mathcal{C}, \\
\displaystyle c_{i,k} \left(-p_{i,k}(t)+x_{i,k}(t)\left({\sum_{j\neq i, j \in \mathcal{T}}{\frac{g_{ji}}{g_{ii}}p_{j,k}(t) + \frac{\nu_{i,k}}{g_{ii}}}}\right)\right) & \mbox{, otherwise}.
\end{cases}
\end{align}
and the distributed allotted SINR target update formula
\begin{align}\label{xdot_formula1}
\dot{x}_{i,k}(t) = b_{i,k}(t) \left[\frac{\zeta_{i}}{N}\left( \gamma_{i}- \frac{1}{N} \sum_{k\in \mathcal{C}}x_{i,k}(t) \right)- w_{i,k}(t)\left(x_{i,k}(t)w_{i,k}(t)-p_{i,k}(t) \right)\right] 
\end{align}
converge to a feasible solution, if one exists, for all positive constants $c_{i,k}\in \mathds{R}_{+}$,  $\zeta_{i} \in \mathds{R}_{+}$, and provided the proportionality gain $b_{i,k}(t)\in \mathbb{R}_{+}$ of the allotted SINR target update formula \eqref{xdot_formula1} is appropriately chosen such that the inequality
\begin{align}\label{condition2_chap5}
b_{i,k}(t)\left[\frac{\zeta_{i}}{N}\left( \gamma_{i}- \frac{1}{N} \sum_{k\in \mathcal{C}}x_{i,k} \right)-w_{i,k}\left(x_{i,k}w_{i,k}-p_{i,k} \right)\right]^2 \geq 2\dot{p}_{i,k}(t) {\ddot{p}}_{i,k}^{2}(t)
\end{align}
is satisfied by all nodes $i$ in at least one of the channels $k\in\mathcal{C}$ and 
\begin{align}\label{U_idot1}
\dot{U_i} =d_{i}\sum_{k\in \mathcal{C}}\left( -\frac{\dot{x}^{2}_{i,k}}{b_{i,k}}+2\phi_{i,k}\dot{\phi}^{2}_{i,k} \right)<0.
\end{align}.
\end{theorem}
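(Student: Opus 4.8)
The plan is to retain the composite utility from the proof of Theorem~\ref{th:1},
\[
U \;=\; \underbrace{\sum_{i=1}^{M} d_i\zeta_i\,\theta_i^{2}}_{U_1}\;+\;\underbrace{\sum_{k\in\mathcal{C}}\mathbf{e}_k^{T}A^{T}(\mathbf{x}_k)DA(\mathbf{x}_k)\mathbf{e}_k}_{U_2},
\]
as a common Lyapunov function for the switched system \eqref{pdot_formula2}--\eqref{xdot_formula1}. As in Theorem~\ref{th:1}, $U$ is positive definite about the candidate equilibrium, radially unbounded, and vanishes exactly when $\mathbf{e}_k=\mathbf{0}$ for every $k$ and $\theta_i=0$ for every $i$. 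Since a feasible solution in the sense of Definition~\ref{def:feasibleMultiple} is assumed to exist, it is enough to show that $\dot U\le 0$ along trajectories, with the largest invariant subset of $\{\dot U=0\}$ consisting only of such feasible equilibria; convergence then follows from LaSalle's invariance principle.

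I would first differentiate along trajectories and use the fact that $x_{i,k}$ enters only the $i$-th summand, writing $\phi_{i,k}:=c_{i,k}\bigl(x_{i,k}w_{i,k}-p_{i,k}\bigr)$ so that $U_i=d_i\zeta_i\theta_i^{2}+d_i\sum_k\phi_{i,k}^{2}$ and, on the FM branch of \eqref{pdot_formula2}, $\dot p_{i,k}=\phi_{i,k}$ (hence $\dot\phi_{i,k}=\ddot p_{i,k}$). Substituting \eqref{xdot_formula1} into the $\partial U_i/\partial x_{i,k}$ terms reproduces the manipulation \eqref{5.48}--\eqref{5.49} of Theorem~\ref{th:1} and supplies the dissipative contribution $-2d_i\sum_k\dot x_{i,k}^{2}/b_{i,k}$ to $\dot U_i$; the remaining contribution is the power-driven term $2d_i\sum_k\phi_{i,k}\dot\phi_{i,k}$, which when $\rho(C(\mathbf{x}_k))<1$ is absorbed by the M-matrix argument of Lemma~\ref{chap5_lemma} but when $\rho(C(\mathbf{x}_k))>1$ need not be negative. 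Inequality \eqref{condition2_chap5} is exactly the requirement that, in at least one channel, the per-channel dissipation $\dot x_{i,k}^{2}/b_{i,k}$ outweighs the per-channel growth $2\phi_{i,k}\dot\phi_{i,k}^{2}=2\dot p_{i,k}\ddot p_{i,k}^{2}$, and \eqref{U_idot1} records that the resulting per-node rate $\dot U_i=d_i\sum_k\bigl(-\dot x_{i,k}^{2}/b_{i,k}+2\phi_{i,k}\dot\phi_{i,k}^{2}\bigr)$ is strictly negative off node $i$'s equilibrium; summing over $i$ gives $\dot U<0$ on the FM branch.

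Next I would treat the frozen branch of \eqref{pdot_formula2}: when $\theta_i\neq0$ while $\dot x_{i,k}=0$ for every channel $k$, the rule sets $\dot p_{i,k}=0$, so $\dot\phi_{i,k}$ is driven only by $\dot w_{i,k}$ and the vanishing $\dot x_{i,k}$, and one checks directly that $\dot U_i\le 0$ there as well. The key point is that $\{\theta_i\neq0,\ \dot x_{i,k}=0\ \forall k\}$ is not an invariant set: $\dot x_{i,k}=0$ simultaneously in all channels forces $\zeta_i\theta_i/N=w_{i,k}\bigl(x_{i,k}w_{i,k}-p_{i,k}\bigr)$ for every $k$, a relation that is destroyed as soon as the power dynamics on the eventually feasible channels move $p_{i,k}$ and $w_{i,k}$, so the state leaves the switching surface and resumes the dissipative FM branch. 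Applying LaSalle to the largest invariant subset of $\{\dot U=0\}$ then forces $\mathbf{e}_k\to\mathbf{0}$ and $\theta_i\to 0$ for all $i$, i.e. the allotted SINRs converge to some $\hat{\mathbf{x}}_k$ with $\frac1N\sum_k\hat x_{i,k}=\gamma_i$ and $\rho(C(\hat{\mathbf{x}}_k))<1$ --- a feasible solution.

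The main obstacle is precisely the transient regime $\rho(C(\mathbf{x}_k(t)))>1$, where $U_2$ fails to be a Lyapunov function for the power subsystem alone (the relevant $A(\mathbf{x}_k)$ is not an M-matrix, so no diagonal $D$ with $A^{T}D+DA\succ0$ exists) and $\sum_k\phi_{i,k}^{2}$ may grow; the whole argument rests on the dissipation injected by the allotted-SINR update outrunning that growth, which is what \eqref{condition2_chap5} enforces. The two delicate points will be (i) verifying that positive gains $b_{i,k}(t)$ satisfying \eqref{condition2_chap5} can be selected simultaneously by all nodes (in a common or in individual channels) without destroying well-posedness of the closed loop, and (ii) handling the discontinuity of \eqref{pdot_formula2} at the surface $\{\dot x_{i,k}=0\ \forall k\}$ --- a dwell-time condition on the switching, or a Filippov/averaging argument, will be needed so that the LaSalle conclusion is legitimate for the non-smooth right-hand side.
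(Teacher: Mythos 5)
Your proposal follows essentially the same route as the paper's proof: the same composite utility $U=\sum_i d_i\bigl(\zeta_i\theta_i^{2}+\sum_k\dot p_{i,k}^{2}\bigr)$, the same substitution $\phi_{i,k}=\dot p_{i,k}$ leading to $\dot U_i=d_i\sum_k\bigl(-\dot x_{i,k}^{2}/b_{i,k}+2\phi_{i,k}\dot\phi_{i,k}^{2}\bigr)$, the same role for inequality \eqref{condition2_chap5}, and the same case analysis showing that the surface $\{\theta_i\neq0,\ \dot x_{i,k}=0\ \forall k\}$ cannot trap the trajectory. The only additions are your explicit LaSalle wrapper and your flagging of the gain-selection and non-smoothness issues, which the paper passes over silently; these refine rather than replace its argument.
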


\begin{proof}
For each transmitter $i$, we let the utility function \eqref{Ui_single} as before. The global utility function is given by the summation of all the individual utility functions and can be written as
\begin{align}\label{ui_new}
U=\sum_{i=1}^{M} U_{i} = \sum_{i=1}^{M}d_{i} \left[\zeta_{i}\left( \gamma_{i}- \frac{1}{N} \sum_{k\in \mathcal{C}}x_{i,k} \right)^2+\sum_{k}\dot{p}^{2}_{i,k} \right] .
\end{align}
Let $\phi_{i,k}=\dot{p}_{i,k}$, then $\dot{\phi}_{i,k}=\ddot{p}_{i,k}$. Hence,
\begin{align}\label{dotU}
\dot{U} =\sum_{i=1}^{M} \dot{U}_{i} = \sum_{i=1}^{M}\left[ \left(\sum_{k\in \mathcal{C}} \frac{\partial U_{i}}{\partial x_{i,k}} \dot{x}_{i,k} \right) + \left(\sum_{k\in \mathcal{C}} \frac{\partial U_{i}}{\partial \phi_{i,k}} \dot{\phi}_{i,k} \right)\right].
\end{align}
By substituting $U_{i}$ as given in \eqref{ui_new} into \eqref{dotU},
\begin{align}\label{Udot}
\dot{U} = \sum_{i=1}^{M}d_{i}\sum_{k\in \mathcal{C}}\left( -\frac{\dot{x}^{2}_{i,k}}{b_{i,k}}+2\phi_{i,k}\dot{\phi}^{2}_{i,k} \right).
\end{align}
This implies that $\dot{U}\leq 0$ if 
\begin{align}\label{eq:cond2}
-\frac{\dot{x}^{2}_{i,k}}{b_{i,k}}+2\phi_{i,k}\dot{\phi}^{2}_{i,k} \leq 0, ~\forall~k \in \mathcal{C}~ \mbox{and} ~\forall~i \in \mathcal{T}. 
\end{align}
Therefore, by substituting \eqref{xdot_formula1} into \eqref{eq:cond2}, the following condition is derived:
\begin{align}\label{condition_chap5}
b_{i,k}(t)\left[\frac{\zeta_{i}}{N}\left( \gamma_{i}- \frac{1}{N} \sum_{k\in \mathcal{C}}x_{i,k} \right)-w_{i,k}\left(x_{i,k}w_{i,k}-p_{i,k} \right)\right]^2
\geq 2\dot{p}_{i,k}(t) {\ddot{p}}_{i,k}^2(t).
\end{align}

\noindent For $\dot{x}_{i,k}\neq 0$, if $\dot{p}_{i,k}(t)<0$, then since $\ddot{p}_{i,k}^{2}(t)>0$, inequality \eqref{condition_chap5} holds for any proportionality gain $b_{i,k}(t)>0$, whereas if $\dot{p}_{i,k}(t)>0$, then $b_{i,k}(t)$ can be adapted so that \eqref{condition_chap5} holds. 

\noindent For $\dot{x}_{i,k}= 0$ and if $\dot{p}_{i,k}(t)>0$, then the inequality is not fulfilled in channel $k$. However, as long there exists at least 
a channel, say $m$, which satisfies $\dot{x}_{i,m}\neq 0$, then the proportionality gain $b_{i,m}(t)$ can be adjusted such that
\begin{align}\label{U_idot}
\dot{U_i} =d_{i}\sum_{k\in \mathcal{C}}\left( -\frac{\dot{x}^{2}_{i,k}}{b_{i,k}}+2\phi_{i,k}\dot{\phi}^{2}_{i,k} \right)<0.
\end{align}
while fulfilling inequality \eqref{condition_chap5} as well. 

\noindent For $\dot{x}_{i,k}= 0~\forall k\in \mathcal{C}$ and $\theta_i \neq 0$, then $\dot{p}_{i,k}(t)=0$. Hence \eqref{xdot_formula1} becomes
\begin{align}\label{eq:xdotpdot0}
\dot{x}_{i,k}(t) = b_{i,k}(t) \left[\frac{\zeta_{i}}{N}\left( \gamma_{i}- \frac{1}{N} \sum_{k\in \mathcal{C}}x_{i,k}(t) \right)\right] \mbox{ and } \dot{x}_{i,k}(t) \neq 0. 
\end{align}

\noindent For  $\theta_i = 0$, then $\dot{x}_{i,k}\neq 0~\forall k\in \mathcal{C}$ unless $\dot{p}_{i,k}(t)=0$ for which equilibrium is reached. Hence, the condition on the power update formula establishes that $\dot{U} < 0$ in all cases, apart from the case in which a feasible solution is reached.
\end{proof}

\begin{remark}
If the maximum power $p_{i,\max}$ is reached, the algorithm continuous to update both formulae as before, and if $\dot{p}_{i,k}(t)<0$, then the power is updated. Hence, the upper bound of the power does not affect the analysis of this work.
\end{remark}


\section{Illustrative Examples} \label{examples}

To demonstrate the essence of the results derived, we present some simple examples. We consider a simple wireless ad-hoc network consisting of $n=4$ nodes (i.e. two communication pairs $S_{i}\rightarrow R_{i}$). The network structure is shown in Figure \ref{example}. The network is symmetric, so initial powers and/or desired SINRs should be different for the pairs in order to observe differentiation in their actions.

\begin{figure}[h]
 \centering
  \includegraphics[width=1.8in]{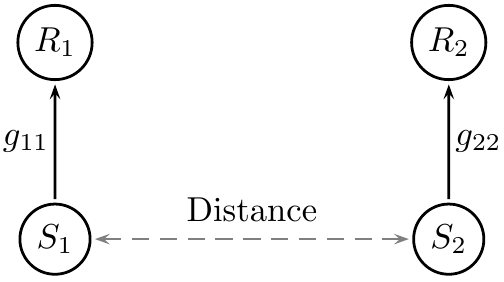}
  \caption{A wireless ad-hoc network of $n=4$ nodes, consisting of two communication pairs \{$S_{i}\rightarrow R_{i}$\}.}
  \label{example}
\end{figure}

By varying the distance between the two communication pairs, the SINR region of the network is changed. Thus, we can either choose a convex SINR region to study or a non-convex one. Based on this network prototype we examine the performance of our algorithm in different cases of network configuration.

The parameters used for the cases studied are summarized below:

\begin{table}[H]
{\small
    \renewcommand{\arraystretch}{1.3}
    \centering
\begin{tabular}{l|r}
  \hline
  Parameter & Value \\
  \hline \hline
  Average SINR target ($\gamma_{i}$) & 3 \\
  Constant ($\zeta_{i}$) & 20 \\
  Proportionality constant ($k_{i}$) & 1 \\
  Initial Proportionality gain ($b_{i,k}$) & 200 \\
  Noise ($\nu_{i,k}$) & 0.04 $\mu W$ \\
  Maximum Power ($p_{i,k}^{max}$) & $10^6$ $\mu W$ \\
  \hline
\end{tabular}
\caption[Parameters of the algorithm and the wireless networks used in the examples]{Parameters of the algorithm and the wireless networks used in the following examples. Power and noise are measured in \emph{Watts} (W) and data rate in bits per second (\emph{bits/s}).}
\label{ParametersVarSINR}}
\end{table}

The initial power levels ($p_{i,k}$) for each node $i$ and for each channel $k$ are chosen at random. The upper bound for the maximum power for the simulations is set to $10^{6}mW$, large enough to allow the distributed algorithms to operate without this extra condition on the power, since we have not considered an upper bound in our derivations. The proportional gain $b_{i,k}$ is initially set to $200$ for all transmitters in the network, which is high enough to guarantee that condition \eqref{condition2_chap5} is fulfilled and it is increased whenever required, depending on the condition so that it maintains a safe margin from the minimum value.

\subsection{Example 1: Feasible network}

In this example, we consider the following network (Figure \ref{ex0}) where the distance between the transmitters (and the receivers) is 8 meters. In this network the interference each receiver experiences is lower compared to the received signal.

\begin{figure}[h]
    \centering
  \includegraphics[width=3.3in]{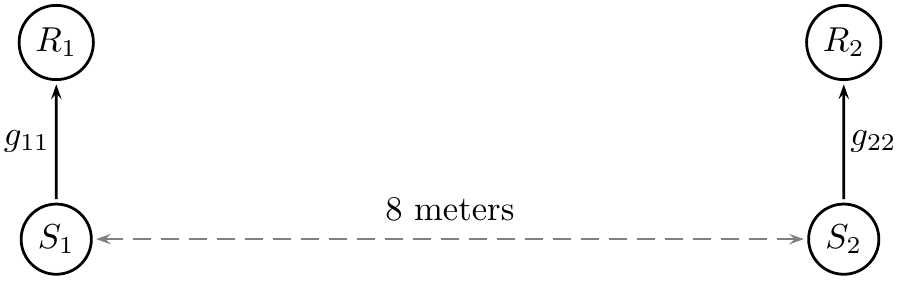}
  \caption{A wireless ad-hoc network of $n=4$ nodes, consisting of two ommunication pairs \{$S_{i}\rightarrow R_{i}$\}. The distance between the transmitters is 8 m.}
  \label{ex0}
\end{figure}

The SINR region is convex and hence the required QoS can be easily achieved by simultaneous transmissions, even for fixed QoS requirements, as shown in Figure \ref{chap5_ex0_sinrRegion}.

\begin{figure}[h]
    \centering
  \includegraphics[width=4in]{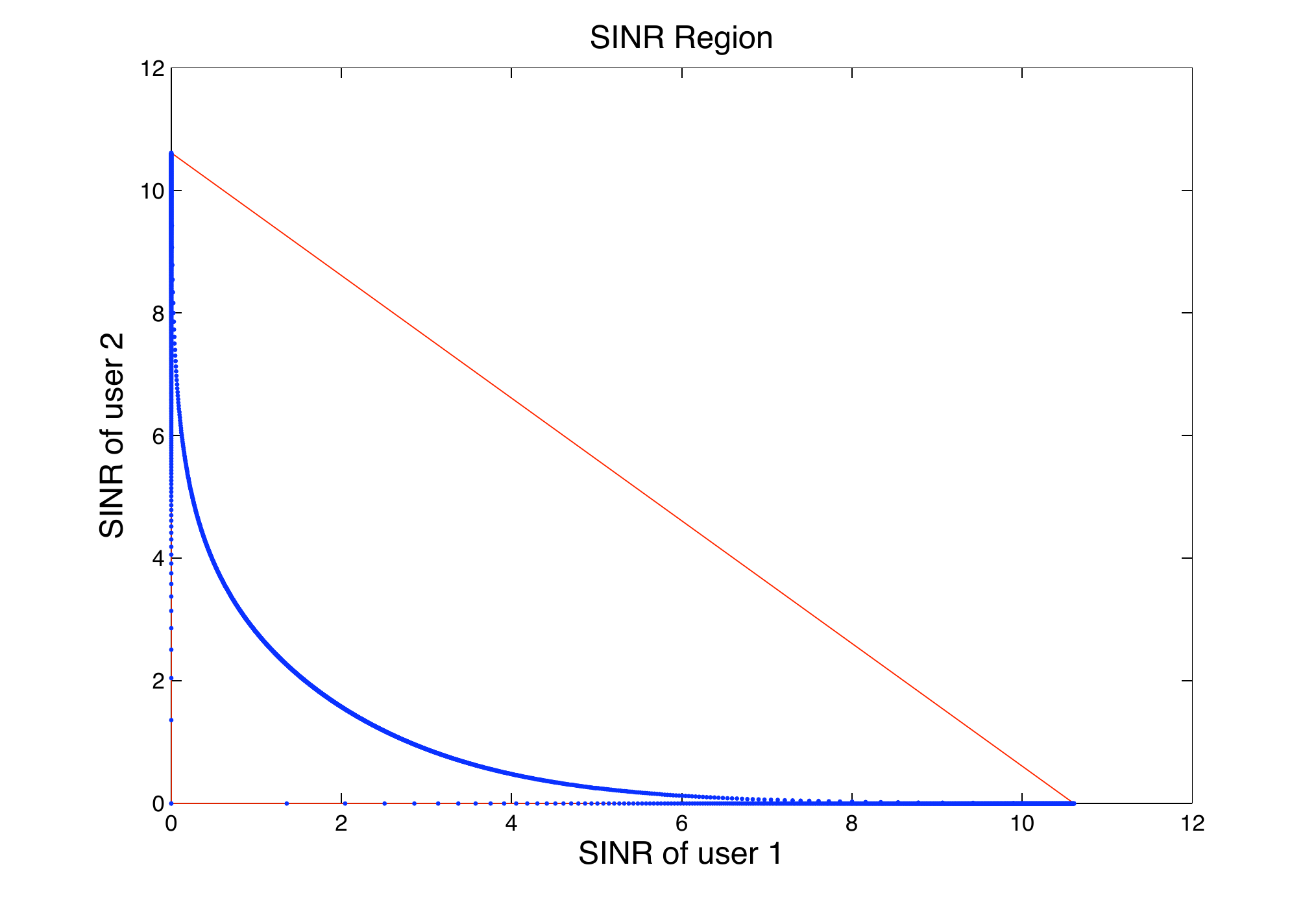}
  \caption{The SINR region of wireless ad-hoc network of Figure \ref{ex0}. It is convex and hence any set of SINRs can be realized with simultaneous transmissions.}
  \label{chap5_ex0_sinrRegion}
\end{figure}

The graphs in Figure \ref{chap5_ex0_all} illustrate the response of our distributed algorithm to the network considered. Each column represents one channel. In the first row, the power levels for all users over time are depicted. In the second row, the allotted SINR targets over time are shown, whereas in the third row the figures show actual SINR values for each of the channels. The graphs show very fast convergence and since the initial powers are very close, the distributed algorithms behave similarly. The system converges to a feasible solution quite fast, even though it is not the optimal one in terms of minimizing the individual's total power. In addition, both transmitters achieve the required QoS on average.

\begin{figure*}[h]
   \centering
    \includegraphics[width=3.2in]{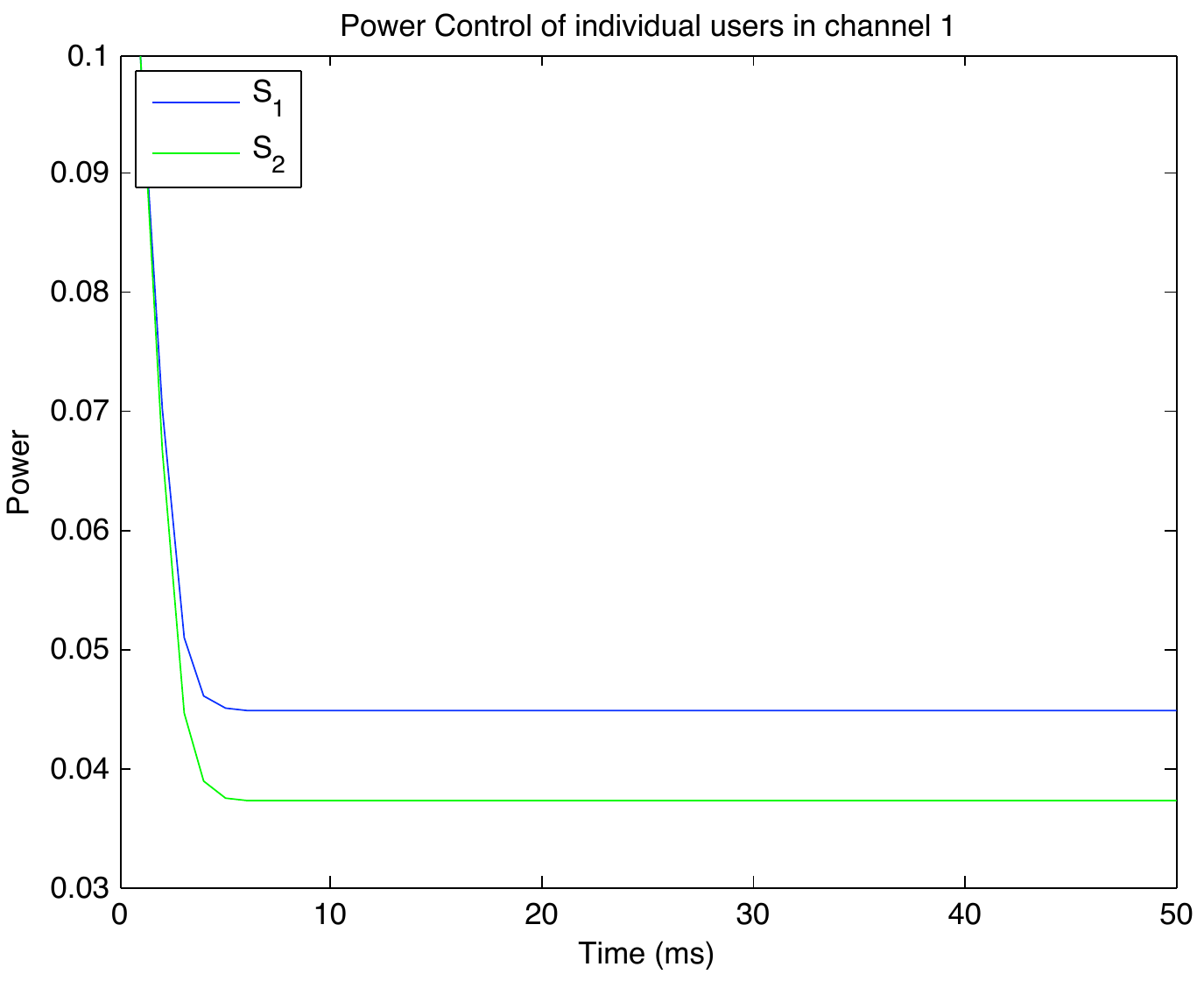}\hfill
    \includegraphics[width=3.2in]{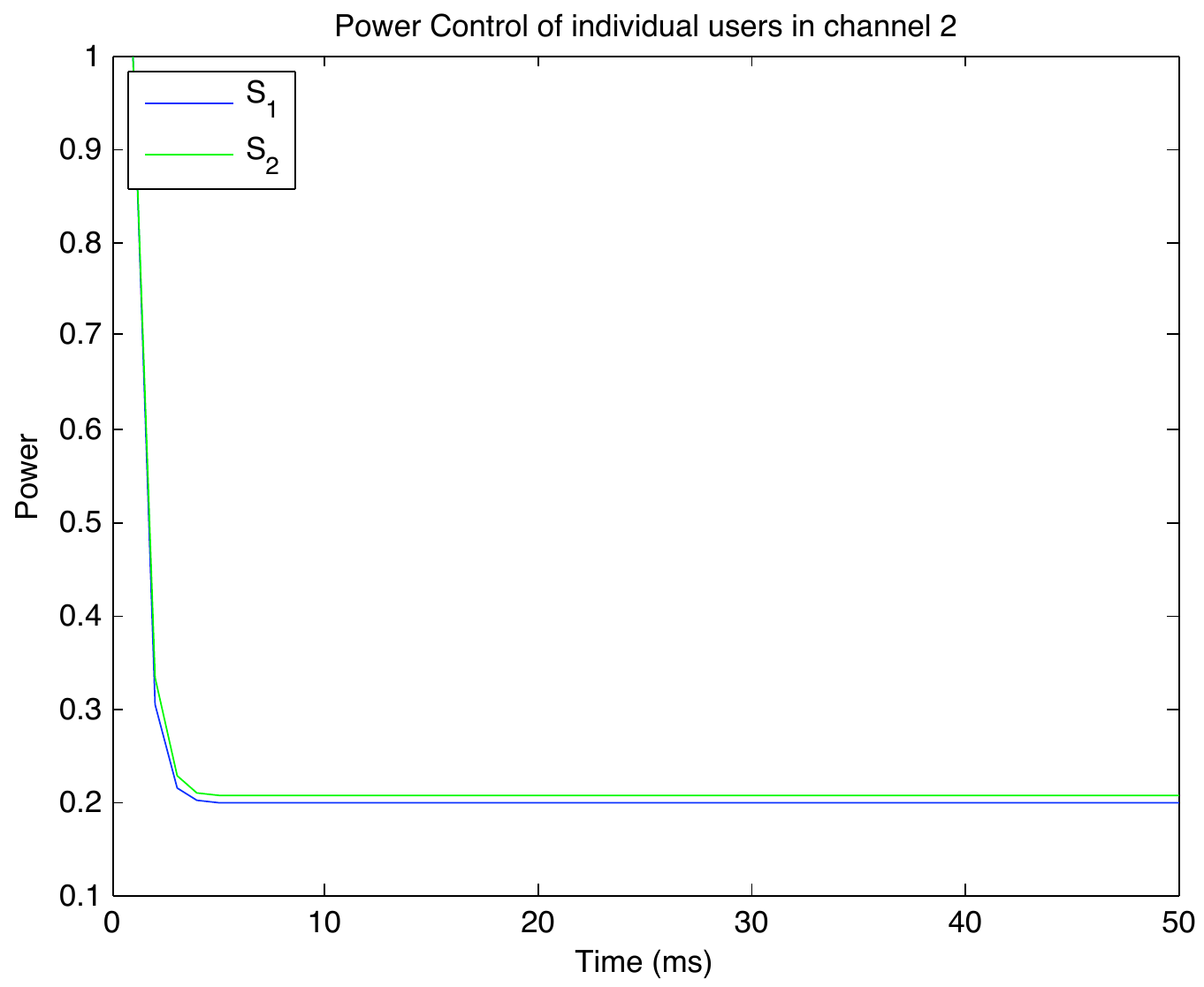}
    \includegraphics[width=3.2in]{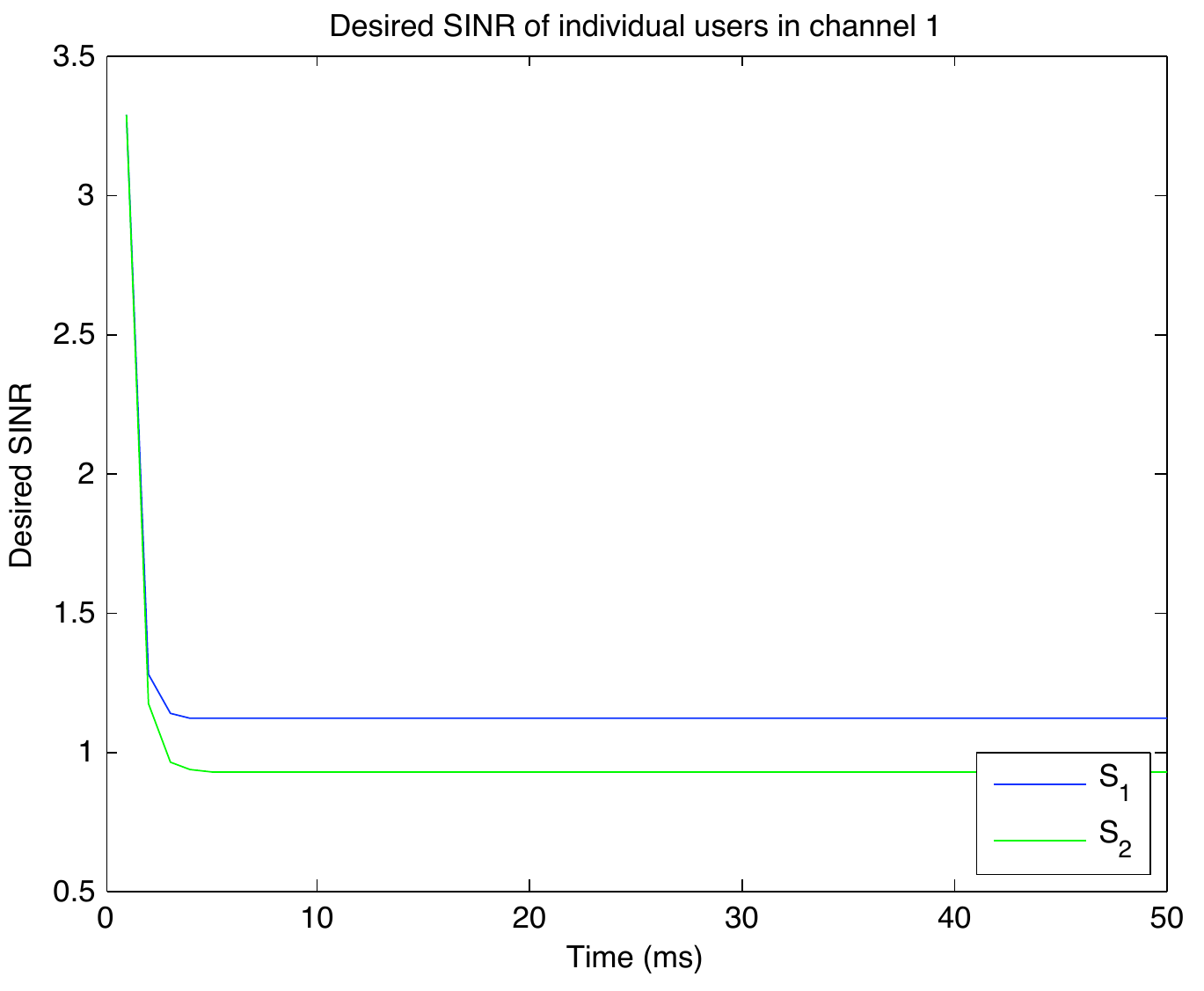}\hfill
    \includegraphics[width=3.2in]{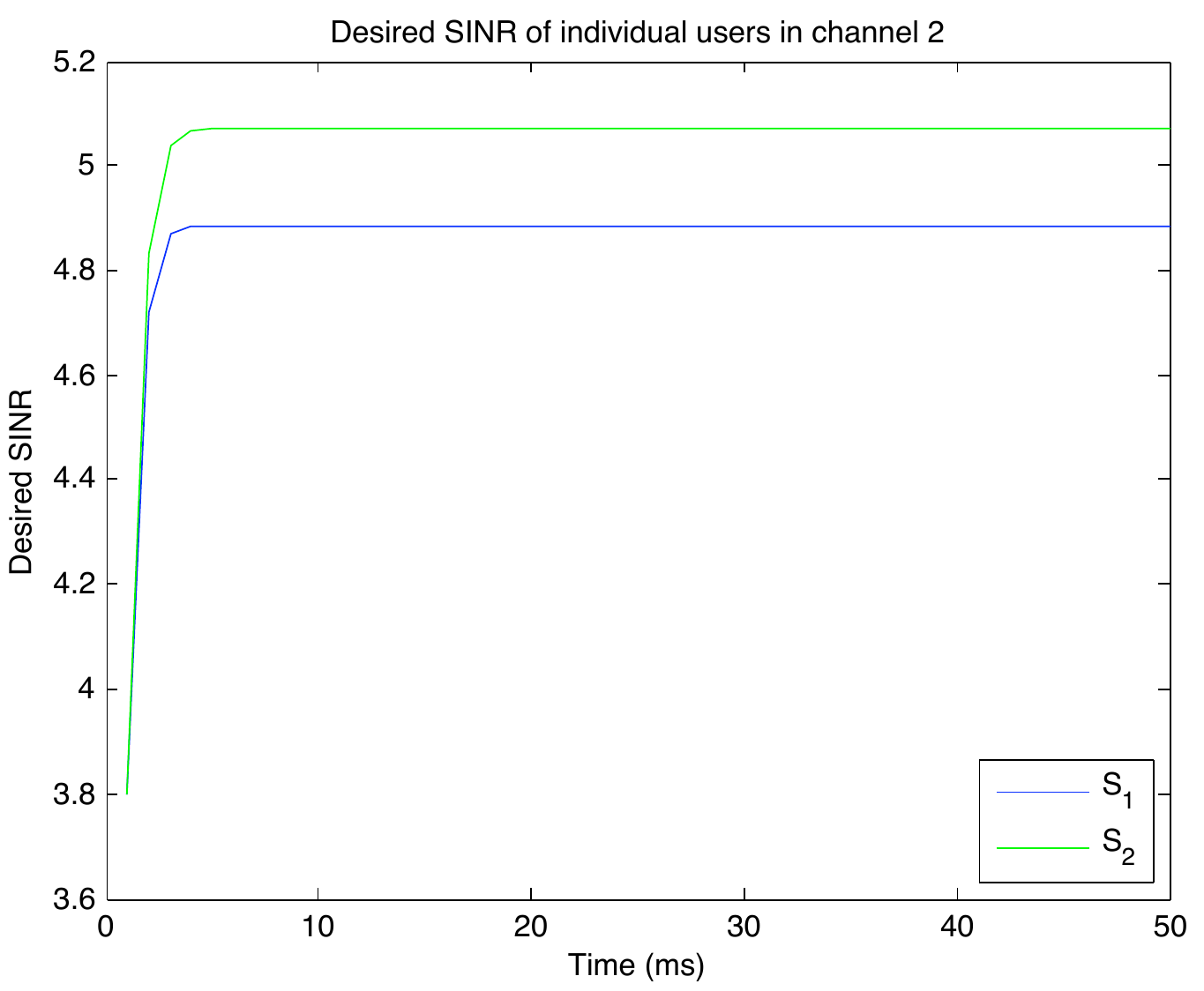}
    \includegraphics[width=3.2in]{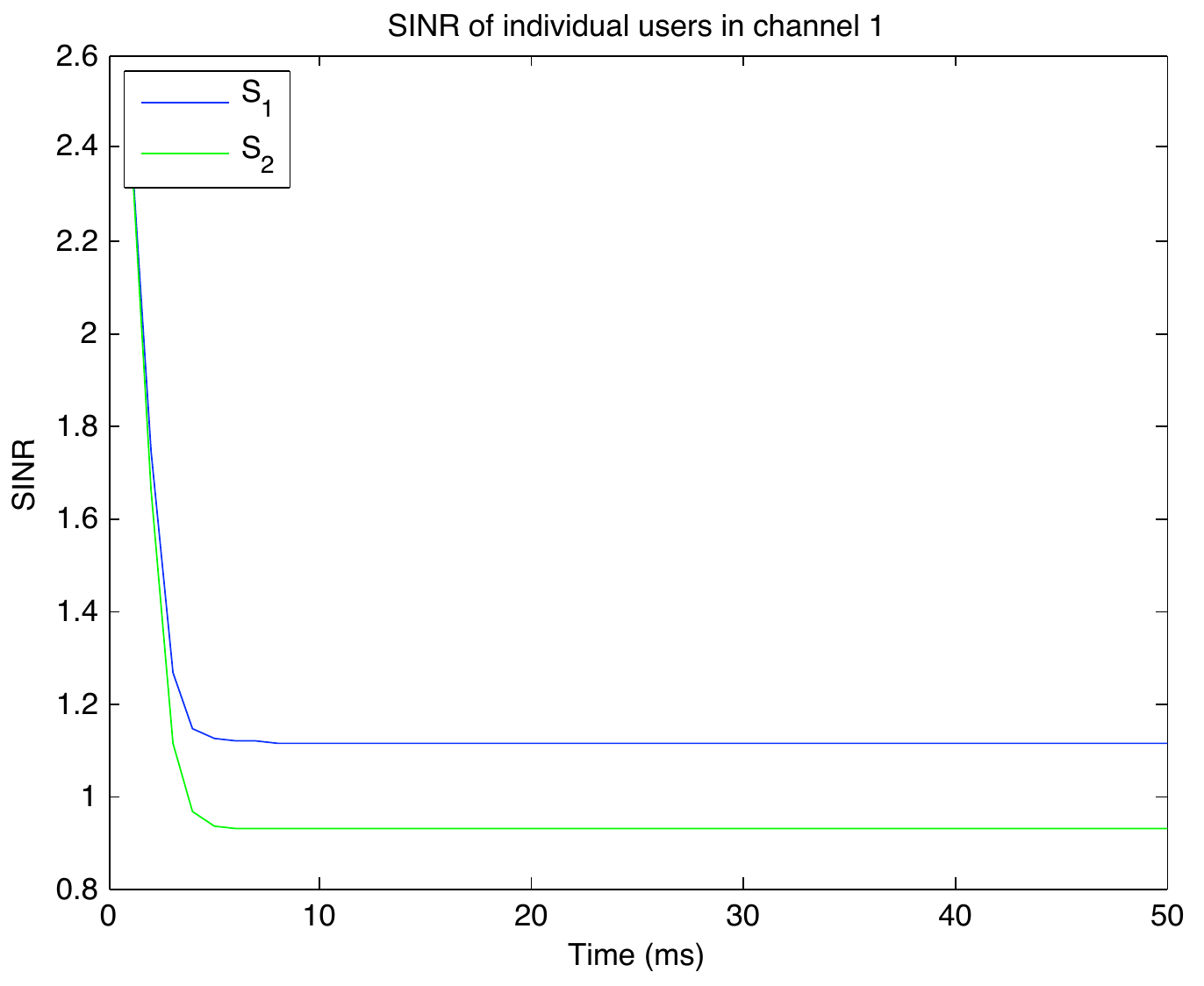}\hfill
    \includegraphics[width=3.2in]{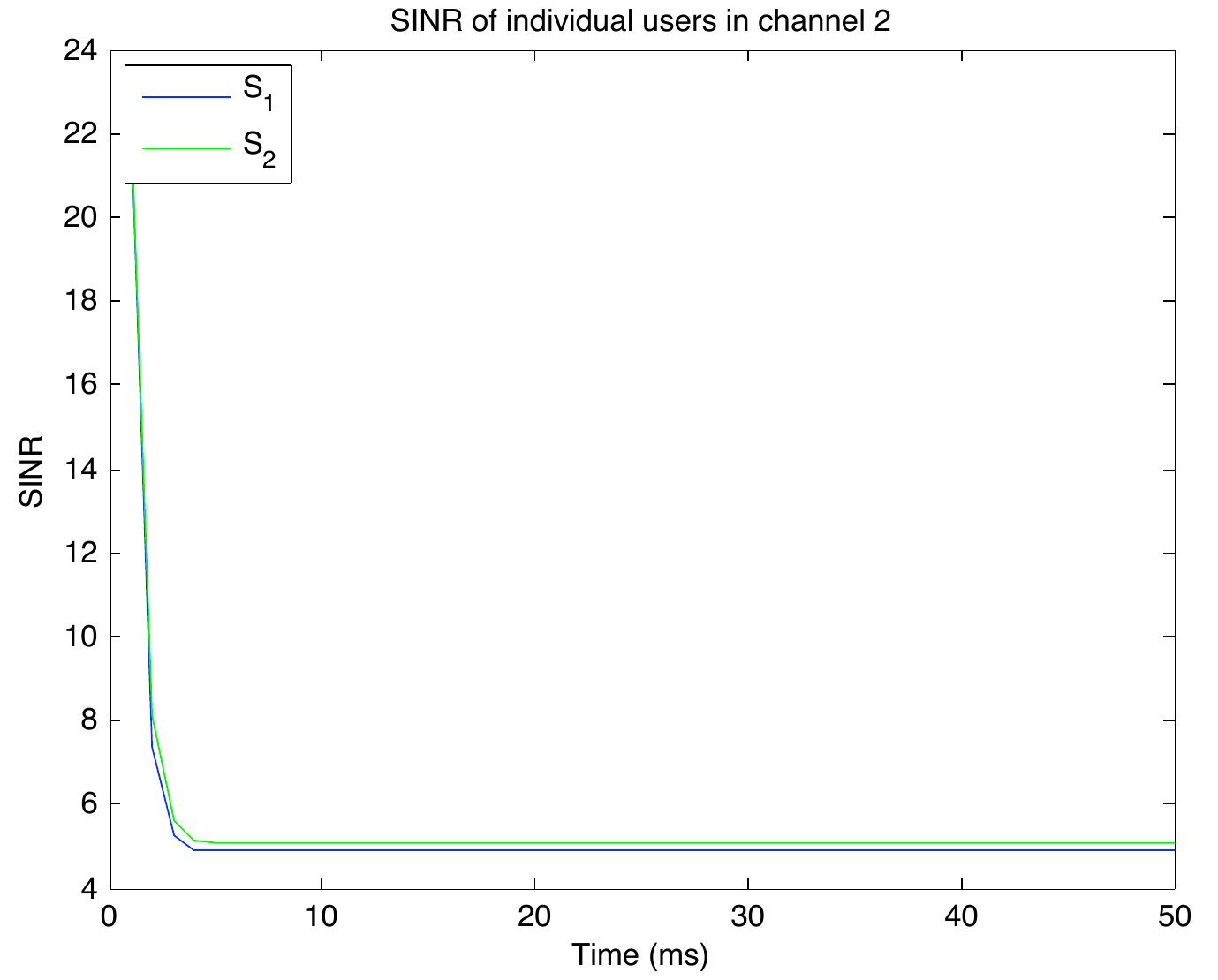}
    \caption{Power, allotted SINR target and SINR for each communication pair at each channel is shown. The allotted SINR converges fast to a feasible solution (not necessarily the optimal) and the system converges quite fast.}
    \label{chap5_ex0_all}
\end{figure*}


\subsection{Example 2: Infeasible network}

In this example, we consider the following network (Figure \ref{chap5_ex2}), which is the same as before, but the distance between the pairs is reduced now to $1.8$ meters. The interference between the two pairs is much bigger now and the SINR region for this network is non-convex (Figure \ref{chap5_ex2_sinrRegion}).

\begin{figure}[h]
    \centering
   \includegraphics[width=1.7in]{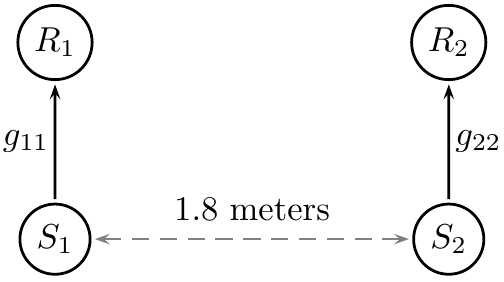}
    \caption{A wireless ad-hoc network of $n=4$ nodes, consisting of two communication pairs \{$S_{i}\rightarrow R_{i}$\}. The distance between the transmitters is 1.5 m.}
    \label{chap5_ex2}
\end{figure}

\begin{figure}[h]
    \centering
    \includegraphics[width=3.4in]{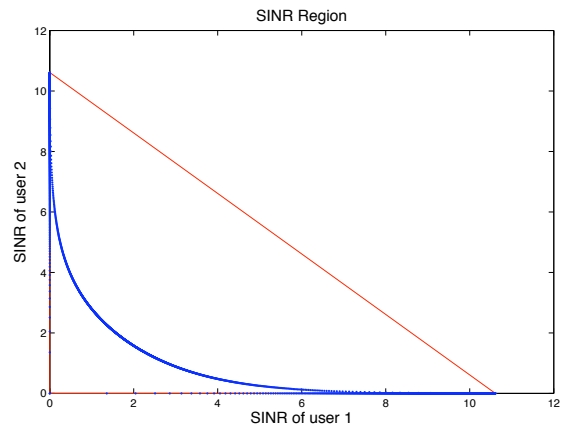}
    \caption{The SINR region of wireless ad-hoc network of Figure \ref{chap5_ex2}. It is non-convex and if the SINR QoS requirement is outside the SINR region, the users have to tend towards different channels.}
    \label{chap5_ex2_sinrRegion}
\end{figure}

As shown in Figure \ref{chap5_ex2_sinrRegion}, the SINR region is non-convex and hence if the SINR QoS requirement is outside the SINR region, then each user has to reduce the SINR requirement in the channel where the other increases it. In this way, they reduce the interference they cause to each other and they can eventually achieve the QoS requirements (Figure \ref{chap5_ex2_all}).

\begin{figure*}[h]
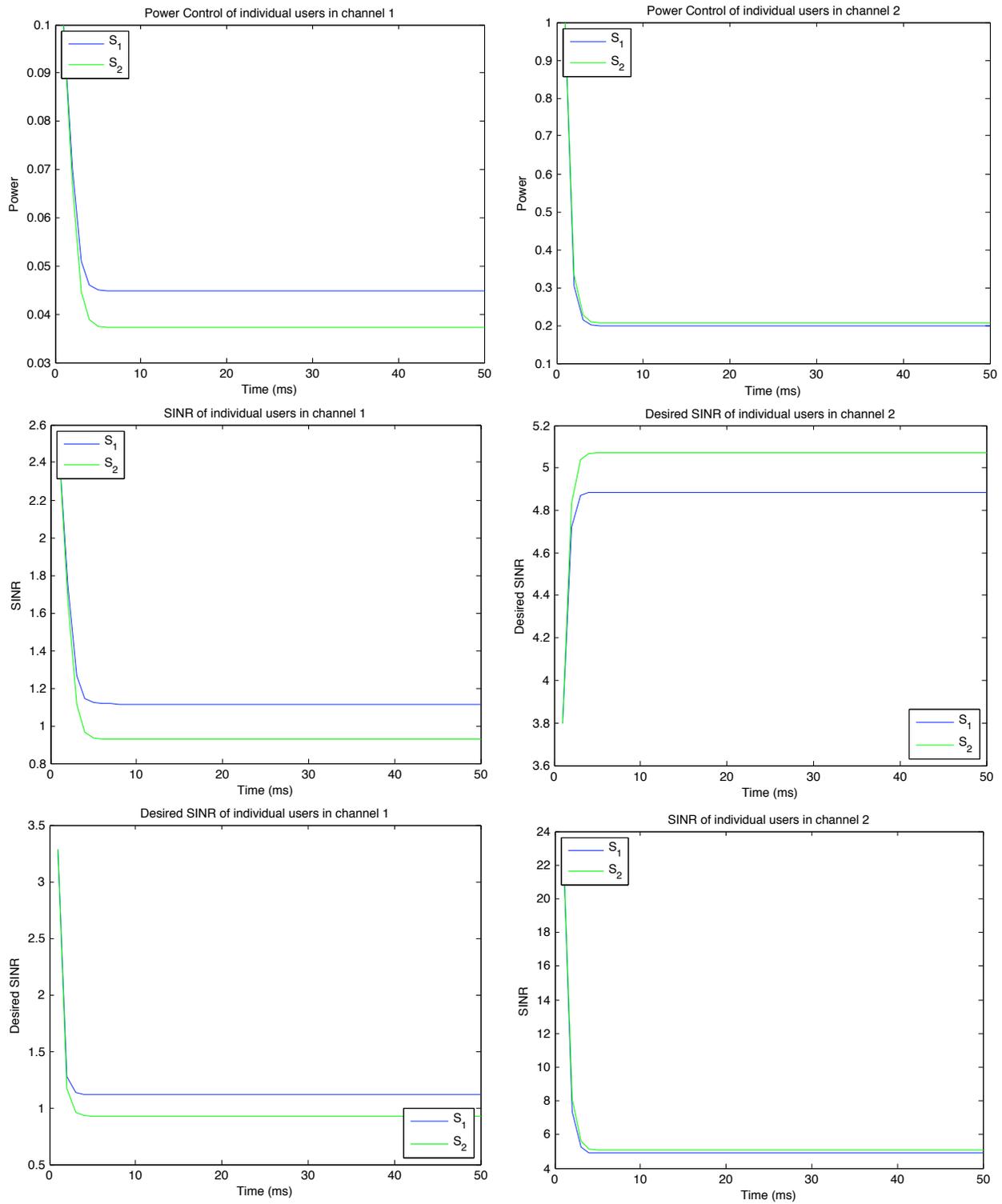

    \centering
    \includegraphics[width=3.2in]{POWERch1}\hfill
    \includegraphics[width=3.2in]{POWERch2}
    \includegraphics[width=3.2in]{SINRch1}\hfill
    \includegraphics[width=3.2in]{desiredSINRch2}
    \includegraphics[width=3.2in]{desiredSINRch1}\hfill
    \includegraphics[width=3.2in]{SINRch2}
    \caption{Power, allotted SINR target and SINR for each communication pair at each channel is shown. The allotted SINR converges slowly, since the QoS required cannot be achieved with simultaneous transmission.}
    \label{chap5_ex2_all}
\end{figure*}


%
%

\section{Conclusion}\label{conclusions}

While traditional approaches that tackle the problem of distributed power control consider a single channel only, here we investigated the more challenging, yet more promising power control problem over multiple channels. More specifically, we have developed a distributed algorithm consisting of two update formulae: one for the power and one for the allotted SINR target. In this work, we have considered multiple channels and aim to achieve the QoS requirement on average over all channels. Using standard Lyapunov stability theory we found the power control and allotted SINR target algorithms such that the whole network as a system converges to a feasible solution, if one exists. As aforementioned, the solution found is not necessarily the optimal. The results of this work are of paramount importance since wireless nodes can make use of multiple channels simultaneously and achieve a QoS on average, that otherwise would be impossible. Note that heterogeneous networks can be also considered in the sense that each node can be equipped with different number of antennae and hence have access to a different number of channels. Such algorithms are useful for elastic and/or opportunistic traffic, where time-varying rates are allowed and large delays are tolerated.

The convergence rate of this algorithm is an ongoing research. It is also important to specify the stability conditions of this algorithm in the presence of uncertainties (e.g. time-varying delays and changing environment).

\bibliographystyle{unsrt}
\bibliography{tau}

\end{document}